\documentclass[12pt]{amsart}
\usepackage[utf8]{inputenc}
\usepackage[T1]{fontenc}
\usepackage{amsmath, amssymb, amsthm}
\usepackage{geometry}
\usepackage{booktabs}
\usepackage{array}
\usepackage{graphicx}
\usepackage{hyperref}
\usepackage{multicol}
\usepackage{enumitem}
\usepackage{tikz}
\usepackage{tikz-3dplot}
\newtheorem{theorem}{Theorem}[section]
\newtheorem{remark}{Remark}[section]
\theoremstyle{definition}
\newtheorem{definition}{Definition}[section]
\newtheorem{lemma}{Lemma}[section]
\newtheorem{proposition}{Proposition}[section]
\newtheorem{corollary}{Corollary}[section]
\newtheorem{example}{Example}[section]

\title[On Some Weighted Sine Inequalities]{On some inequalities for weighted products and ratios of the sine function}

\author {Augustine L. Mahu}
\address{Department of Mathematics, University of Ghana, PO. Box LG 62 Legon, Accra, Ghana }
\email{ almahu@ug.edu.gh}
\author {Beno\^it F. Sehba}
\address{Department of Mathematics, University of Ghana, PO. Box LG 62 Legon, Accra, Ghana }
\email{ bfsehba@ug.edu.gh}
\author {Cecilia D. Williams}
\address{Department of Mathematics, University of Ghana, PO. Box LG 62 Legon, Accra, Ghana }
\email{ cdwilliams@st.ug.edu.gh}

\begin{document}


\begin{abstract}
We introduce a reflection-substitution technique for sine inequalities that yields, via H\"older's inequality and its reverse, $2^{n-1}$ distinct upper bounds for products and lower bounds for ratios of weighted sines. The comparison between bounds reduces to simple sign conditions on linear combinations of the variables. We fully classify the two- and three-dimensional cases, providing explicit dominance criteria with detailed examples for each signature.
\end{abstract}

\maketitle


\thispagestyle{empty}


\section{Introduction}

Inequalities involving trigonometric functions are fundamental in analysis, geometry, and approximation theory. Among the most classical results is the sine-concavity inequality
\begin{equation}\label{eq:prodineqsine}
  \prod_{i=1}^{n}\sin^{\alpha_i}(\theta_i)
  \;\leq\;
  \sin^{\alpha}\!\left(\sum_{i=1}^n\frac{\alpha_i}{\alpha}\,\theta_i\right),
  \qquad
  \alpha=\sum_{i=1}^n\alpha_i,
\end{equation}
valid for $\theta_i\in(0,\pi)$ and positive weights $\alpha_i$. This follows directly from Jensen's inequality applied to the concave function $\log\sin$ on $(0,\pi)$ (see for example \cite{Bullen2003,Mitrinovic1970,MPF1993}).

A subtler direction is the derivation of sharp lower bounds for ratio expressions of the form
\begin{equation}\label{eq:ratioform}
  \frac{\displaystyle\prod_{i=1}^{k}\sin^{\alpha_i}(\pi x_i)}{\displaystyle\prod_{i=k+1}^{n}\sin^{\alpha_i}(\pi x_i)},
  \qquad x_i\in(0,1).
\end{equation}
Such ratios arise naturally when solving~\eqref{eq:prodineqsine} for one variable in terms of the others, and appear in integral estimates for beta and gamma functions, Euler-type integrals.

In a recent paper~\cite{MSW2026}, we revisited inequality~\eqref{eq:prodineqsine} via the log-convexity of the Gamma function together with Euler's reflection formula. This approach, after the substitution $\theta_i=\pi\lambda_i$, is equivalent to applying H\"older's inequality to the Beta-function integral. For specific paired or cyclic weight choices, we exhibited two competing upper bounds with explicit sign-condition criteria determining which bound is sharper.

The present paper generalizes this phenomenon significantly. We observe that the freedom to represent $\sin(\pi x_i)$ through either of the two Beta-integral representations attached to the reflection identity $\sin(\pi x_i)=\sin(\pi(1-x_i))$ is available independently at every coordinate $i=1,\dots,n$. Exploiting this freedom in full generality produces, for $n$ weights, not two but $2^{n-1}$ distinct upper bounds for the weighted product in~\eqref{eq:prodineqsine}. The bound of~\cite{MSW2026} reappears as one member of this larger family.

Dualizing the argument by means of the reverse H\"older inequality, we show that the same reflection-substitution technique produces $2^{n-1}$ distinct lower bounds for ratios of the form~\eqref{eq:ratioform}, a direction not previously addressed.

The comparison of the bounds within each family reduces to simple sign conditions on linear combinations of $(2x_i-1)$, so the sharpest bound can always be selected explicitly. We treat the two- and three-dimensional cases in complete detail; these settings arise most often in applications and already display the full range of sign configurations while remaining transparent enough to admit closed-form dominance criteria.

We are motivated by a simple observation that the classical proof of~\eqref{eq:prodineqsine} overlooks; the reflection identity $\sin(\pi x_i)=\sin(\pi(1-x_i))$ gives each angle two equally valid representations, yet Jensen's argument commits to only one of them, with no mechanism for asking whether a different choice might do better. Our earlier observation that this freedom, exercised at a single coordinate, already produces two competing bounds with a clean dominance criterion~\cite{MSW2026} suggested that the phenomenon was not accidental but structural and that restricting it to one coordinate was leaving most of the picture unexplored. This paper follows that freedom to its natural conclusion, letting it act independently at every coordinate.

The paper is organized as follows. Section~\ref{sec:prelim} recalls some necessary tools needed in our presentation. In Section~\ref{sec:method}, we present general results for upper bounds on products and lower bounds on ratios. In Section~\ref{sec:compare}, we compare the members of each family of bounds in two and three dimensions, identifying the exact dominance region for every sign pattern. Section~\ref{sec:examples} provides concrete examples illustrating the theory in all cases, including numerical computations. We end the paper with a conclusion section.


\section{Preliminaries}
\label{sec:prelim}

Recall that the Beta function is defined for $x>0$, $y>0$ by
\[
B(x,y):=\int_0^1 t^{x-1}(1-t)^{y-1}\,dt.
\]
For any $0<x<1$, the following reflection formula holds:
\begin{equation}\label{eq:reflection}
B(1-x,x)=\frac{\pi}{\sin(\pi x)}.
\end{equation}
This identity is classical (see for example \cite{Artin1964,BohrMollerup1922}). It is precisely the symmetry $\sin(\pi x)=\sin(\pi(1-x))$ hidden in~\eqref{eq:reflection} that will drive the reflection-substitution technique.

The following classical H\"older inequality is one of the key tools for finding upper bounds. For its proof, we refer the reader to \cite{HLP1934,Rudin1987}.

\begin{lemma}\label{lem:holder}
Let $f_1,\ldots,f_m$ be positive measurable functions on a measure space $(X,\mu)$.
Let $p_1,\ldots,p_m \in (1,\infty)$ satisfy $\sum_{i=1}^{m} 1/p_i = 1$. Then
\[
\int_X \prod_{i=1}^{m} f_i(x) \, d\mu(x) \leq \prod_{i=1}^{m} \left(\int_X f_i(x)^{p_i} \, d\mu(x)\right)^{1/p_i}.
\]
\end{lemma}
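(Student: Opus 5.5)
The plan is to argue by induction on the number $m$ of factors, with the two-function case as the engine. Throughout we may assume every integral on the right-hand side is finite; if some $\int_X f_i^{p_i}\,d\mu=\infty$ the inequality is trivial.

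\emph{Base case $m=2$.} Write $p=p_1$ and $q=p_2$. If $\int_X f_1^{p}\,d\mu=0$, then $f_1=0$ almost everywhere, and both sides vanish. Otherwise we normalize:
\[
F=\frac{f_1}{\bigl(\int_X f_1^{p}\,d\mu\bigr)^{1/p}},\qquad G=\frac{f_2}{\bigl(\int_X f_2^{q}\,d\mu\bigr)^{1/q}},
\]
so that $\int_X F^p\,d\mu=\int_X G^q\,d\mu=1$. Young's inequality gives, pointwise,
\[
ab\le \frac{a^p}{p}+\frac{b^q}{q}\qquad (a,b\ge 0).
\]
This itself follows from the concavity of $\log$: if $a,b>0$, then
\[
\log\!\Bigl(\tfrac1p a^p+\tfrac1q b^q\Bigr)\ge \tfrac1p\log a^p+\tfrac1q\log b^q=\log(ab),
\]
and the case $ab=0$ is trivial. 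Integrating the inequality with $a=F(x)$ and $b=G(x)$ yields
\[
\int_X FG\,d\mu\le \frac1p+\frac1q=1.
\]
Undoing the normalization gives the claim for $m=2$.

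\emph{Inductive step.} Suppose the statement holds for $m-1\ge 2$ functions, and let $p_1,\dots,p_m$ be as in the statement. Set
\[
\frac1{r}=\sum_{i=1}^{m-1}\frac1{p_i}=1-\frac1{p_m}\in(0,1),
\]
so that $r\in(1,\infty)$ and $\frac1r+\frac1{p_m}=1$.

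First, apply the two-function case to $g=\prod_{i<m}f_i$ and $f_m$ with exponents $r$ and $p_m$:
\[
\int_X \prod_{i=1}^m f_i\,d\mu\le \Bigl(\int_X \prod_{i<m} f_i^{\,r}\,d\mu\Bigr)^{1/r}\Bigl(\int_X f_m^{p_m}\,d\mu\Bigr)^{1/p_m}.
\]

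Next, apply the induction hypothesis to the functions $f_i^{\,r}$ for $i<m$, with exponents $p_i/r$. These satisfy $\sum_{i<m} r/p_i=1$. Each $p_i/r>1$, because $1/p_i<1/r$ strictly when $m-1\ge 2$. The hypothesis then gives
\[
\int_X \prod_{i<m} f_i^{\,r}\,d\mu\le \prod_{i<m}\Bigl(\int_X f_i^{p_i}\,d\mu\Bigr)^{r/p_i}.
\]
Raising this to the power $1/r$ and combining it with the previous inequality closes the induction.

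The main obstacle is bookkeeping rather than any conceptual difficulty. One must check that the auxiliary exponents $r$ and $p_i/r$ lie strictly in $(1,\infty)$, which is what the hypotheses of the lemma require. One must also handle the degenerate zero and infinite integrals correctly in the normalization step. The positivity of the $f_i$ makes all powers well defined and measurable, so no further care is needed.
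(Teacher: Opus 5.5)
Your proof is correct. The $m=2$ case via Young's inequality (derived from concavity of $\log$) is sound, and so is the induction through the auxiliary exponent $r$ with $1/r=1-1/p_m$. You also correctly verify that $r>1$ and that each $p_i/r>1$; the latter uses that $1/r$ is a sum of at least two positive terms when $m\ge 3$.

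The paper gives no proof of this lemma; it simply cites Hardy--Littlewood--P\'olya and Rudin. Your argument (two-function case by Young/convexity, then grouping $\prod_{i<m}f_i$ against $f_m$) is the standard textbook proof found in such references, so including it only makes the lemma self-contained.

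One minor bookkeeping point concerns your claim that the inequality is ``trivial'' when some $\int_X f_i^{p_i}\,d\mu=\infty$. That claim relies on the convention $0\cdot\infty=0$. It also relies on the observation that if another integral $\int_X f_j^{p_j}\,d\mu$ vanishes, then $f_j=0$ a.e.\ and the left side is $0$; for strictly positive $f_j$ this forces $\mu(X)=0$. Likewise, the zero case in your base step should be handled for either factor, not only $f_1$.
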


For our lower bounds, we use the following generalized reverse H\"older inequality (see \cite{HLP1934}).

\begin{lemma}\label{lem:reverseholder}
Let $f_1, \dots, f_m$ be positive measurable functions on a measure space $(X, \mu)$.
Let $p_1, \dots, p_m \in \mathbb{R} \setminus \{0\}$ satisfy $\displaystyle\sum_{i=1}^m \frac{1}{p_i} = 1$.
If at least one $p_i \in (0,1)$, then
\[
\int_X \prod_{i=1}^m f_i(x) \, d\mu(x) \;\ge\; \prod_{i=1}^m \left( \int_X f_i(x)^{p_i} \, d\mu(x) \right)^{1/p_i},
\]
where for $p_i < 0$, the term $f_i^{p_i} = 1 / f_i^{|p_i|}$, and all integrals are assumed finite.
\end{lemma}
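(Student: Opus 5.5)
We would reduce the statement to the forward H\"older inequality, Lemma~\ref{lem:holder}. One caveat on the hypotheses comes first. If two exponents lie in $(0,1)$, the inequality can fail. For example, take $p_1=p_2=\tfrac12$, $p_3=-\tfrac13$ on a probability space, with $f_3\equiv 1$ and $f_1,f_2$ close to indicators of complementary sets: the left side is nearly $0$ while the right side stays bounded away from $0$. We therefore read the lemma in its classical form (as in \cite{HLP1934}): after relabelling, $p_1\in(0,1)$ and $p_i<0$ for $i=2,\dots,m$. This is the configuration used later in the paper. We also assume the integrals on both sides are finite and positive.

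The key step is to isolate $f_1$ as a product, not the full product. Write $F=\prod_{i=1}^m f_i$, so that
\[
f_1^{p_1}=F^{p_1}\prod_{i=2}^m f_i^{-p_1}.
\]
We then apply Lemma~\ref{lem:holder} to the right-hand side with exponents $P_1=1/p_1$ and $P_i=-p_i/p_1$ for $i\ge 2$. Each of these exceeds $1$, since $p_1\in(0,1)$ and $p_i<0$. They are conjugate because
\[
\frac1{P_1}+\sum_{i\ge2}\frac1{P_i}=p_1\Bigl(1-\sum_{i\ge2}\frac1{p_i}\Bigr)=p_1\cdot\frac1{p_1}=1,
\]
using the hypothesis $\sum_i 1/p_i=1$.

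Hölder then gives
\[
\int_X f_1^{p_1}\,d\mu\le\Bigl(\int_X F\,d\mu\Bigr)^{p_1}\prod_{i\ge2}\Bigl(\int_X f_i^{p_i}\,d\mu\Bigr)^{-p_1/p_i},
\]
because $(f_i^{-p_1})^{P_i}=f_i^{p_i}$. Raising both sides to the positive power $1/p_1$ preserves the inequality. We then divide by $\prod_{i\ge2}\bigl(\int_X f_i^{p_i}\,d\mu\bigr)^{-1/p_i}$, which is a positive finite quantity, and obtain exactly
\[
\int_X\prod_{i=1}^m f_i\,d\mu\ \ge\ \prod_{i=1}^m\Bigl(\int_X f_i^{p_i}\,d\mu\Bigr)^{1/p_i}.
\]

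The only real obstacle is bookkeeping rather than analysis. We must choose the exponents $P_i$ correctly and check that they are admissible, that is, greater than $1$ and conjugate. We must also confirm that every power and division in the rearrangement involves positive finite quantities, so that no inequality reverses unexpectedly. The finiteness assumption in the statement supplies exactly this, together with the positivity of the $f_i$.
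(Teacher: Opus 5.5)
Your proof is correct under the hypothesis you adopt. It also goes beyond the paper, which gives no argument for this lemma and only points to \cite{HLP1934}.

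Your counterexample is sound: $\sum 1/p_i=2+2-3=1$, the left side tends to $0$, and the right side tends to $\mu(A)^2\mu(A^c)^2>0$. So ``at least one $p_i\in(0,1)$'' is too weak. The correct hypothesis is exactly one $p_i\in(0,1)$ with all the others negative.

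Your reduction, $f_1^{p_1}=F^{p_1}\prod_{i\ge2}f_i^{-p_1}$ followed by Lemma~\ref{lem:holder} with $P_1=1/p_1$ and $P_i=-p_i/p_1$, is the standard one, and every step checks. One imprecision: $P_i>1$ for $i\ge2$ does not follow from the signs alone. It follows from the conjugacy identity you verify next, since the reciprocals are positive and sum to $1$. Compared with the bare citation, your route makes explicit which sign pattern is needed, and here that matters.

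Your side remark that this is the configuration used later holds only for $k=1$, i.e.\ signatures $(+,-)$ and $(+,-,-)$. For $k\ge2$, Theorem~\ref{thm:lower-main} feeds two positive exponents $\alpha/\alpha_i$ into the lemma. That is exactly the regime your disjoint-support construction breaks, and the theorem itself fails there. Take $\beta=\gamma=1$, $\delta=3/2$ and $(x,y,z)=(0.1,0.9,0.5)$, which satisfy all hypotheses of Theorem~\ref{thm:lower-main} (with $\beta>\alpha$). Then $\alpha=1/2$, the exponents are $(1/2,1/2,-1/3)$ as in your example, and $M_{\emptyset,\emptyset}=1/2$. The claimed bound reads $1\le\sin^2(\pi/10)\approx0.095$, which is false.
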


The following elementary comparison lemma for the sine function will be used repeatedly.

\begin{lemma}\label{lem:sine-compare}
Let $\lambda > 0$ and $\sigma,\tau \in (0,\lambda)$. Then
\[
\sin\left(\frac{\pi\sigma}{\lambda}\right) \geq \sin\left(\frac{\pi\tau}{\lambda}\right) \iff (\sigma-\tau)(\sigma+\tau-\lambda) \leq 0,
\]
with equality if and only if $\sigma = \tau$ or $\sigma + \tau = \lambda$.
\end{lemma}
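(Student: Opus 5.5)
We reduce the comparison to the sign of a difference of sines and use a sum-to-product identity. Write $a=\pi\sigma/\lambda$ and $b=\pi\tau/\lambda$. Since $\sigma,\tau\in(0,\lambda)$, both $a$ and $b$ lie in $(0,\pi)$. The identity
\[
\sin a-\sin b = 2\cos\!\left(\frac{a+b}{2}\right)\sin\!\left(\frac{a-b}{2}\right)
\]
reduces the statement to determining the signs of the two factors on the right.

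For the second factor, we have $(a-b)/2\in(-\pi/2,\pi/2)$. On this interval the sine function is strictly increasing and vanishes only at $0$, so $\sin\bigl((a-b)/2\bigr)$ has the same sign as $a-b$, and hence the same sign as $\sigma-\tau$.

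For the first factor, we have $(a+b)/2\in(0,\pi)$. On this interval the cosine function is strictly decreasing and vanishes only at $\pi/2$. Therefore $\cos\bigl((a+b)/2\bigr)$ has the sign of $\pi-(a+b)$, which is the sign of $\lambda-\sigma-\tau$.

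Combining the two factors, $\sin a-\sin b$ has the same sign as $(\sigma-\tau)(\lambda-\sigma-\tau)=-(\sigma-\tau)(\sigma+\tau-\lambda)$. It follows that
\[
\sin a\ge \sin b \iff (\sigma-\tau)(\sigma+\tau-\lambda)\le 0 .
\]
Equality $\sin a=\sin b$ holds exactly when one of the two factors vanishes. By the strict monotonicity established above, this happens exactly when $\sigma=\tau$ or $\sigma+\tau=\lambda$.

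The only point requiring care is the range bookkeeping in the two middle steps. The open-interval hypotheses on $\sigma$ and $\tau$ are what place both half-angles strictly inside the intervals where sine and cosine are strictly monotone with a single zero. Once that is checked, the rest follows directly.

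An alternative route avoids the trigonometric identity. The function $\sin(\pi s/\lambda)$ is symmetric about $s=\lambda/2$ and strictly decreasing in $|s-\lambda/2|$ on $(0,\lambda)$. Hence $\sin a\ge\sin b$ if and only if $|\sigma-\lambda/2|\le|\tau-\lambda/2|$, that is, $(\sigma-\lambda/2)^2-(\tau-\lambda/2)^2\le 0$. This difference factors as $(\sigma-\tau)(\sigma+\tau-\lambda)$, and the equality case follows in the same way.
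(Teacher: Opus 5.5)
Your proof is correct and follows the paper's own argument: you apply the sum-to-product identity, then match the sign of the cosine factor with $\lambda-\sigma-\tau$ and the sign of the sine factor with $\sigma-\tau$. Your explicit half-angle range checks just spell out what the paper asserts in one line, and your alternative route via symmetry about $\lambda/2$, which factors as $(\sigma-\lambda/2)^2-(\tau-\lambda/2)^2=(\sigma-\tau)(\sigma+\tau-\lambda)$, is also valid.
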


\begin{proof}
This was proved in \cite{MSW2026}. We give here another proof.

By the sum-to-product identity, we have
\[
\sin\left(\frac{\pi\sigma}{\lambda}\right) - \sin\left(\frac{\pi\tau}{\lambda}\right) = 2\cos\left(\frac{\pi(\sigma+\tau)}{2\lambda}\right)\sin\left(\frac{\pi(\sigma-\tau)}{2\lambda}\right).
\]
Since $\sigma,\tau \in (0,\lambda)$, we have $\operatorname{sgn}(\cos \frac{\pi(\sigma+\tau)}{2\lambda}) = \operatorname{sgn}(\lambda-\sigma-\tau)$ and $\operatorname{sgn}(\sin \frac{\pi(\sigma-\tau)}{2\lambda}) = \operatorname{sgn}(\sigma-\tau)$. Therefore $\sin(\pi\sigma/\lambda) - \sin(\pi\tau/\lambda) \geq 0$ if and only if $(\sigma-\tau)(\sigma+\tau-\lambda) \leq 0$. Equality holds precisely when $\sigma=\tau$ or $\sigma+\tau=\lambda$.
\end{proof}


\section{The reflection-substitution method and main general results}
\label{sec:method}
In this section, we give two general results on the upper bounds for the weighted products and lower bound for ratios of sine function. We start by explaining our approach and illustrate it with examples.
\subsection{Intuition and basic idea}

The reflection formula~\eqref{eq:reflection} says that $\pi/\sin(\pi x_i)$ is computed by the same Beta integral whether we present the exponent pair as $(x_i,1-x_i)$ or as $(1-x_i,x_i)$. Indeed, since $B(1-x_i,x_i)=B(x_i,1-x_i)$, the integrand $t^{x_i-1}(1-t)^{-x_i}$ may equally well be written as $t^{(1-x_i)-1}(1-t)^{-(1-x_i)}$ after exchanging $t$ and $1-t$.

In other words, each coordinate $x_i$ enters the Beta integral representation of $\pi/\sin(\pi x_i)$ through two equivalent, interchangeable exponent choices: the angle $x_i$ itself, or its complement $1-x_i$. Because this choice is made independently for every coordinate, a weighted product of $n$ sine values admits $2^n$ apparently different but pointwise-equal integral representations.

\begin{example}
For $n=2$, the four representations are
\[
\begin{array}{|c|c|}
 \hline \text{Choice} & \text{Integrand exponent pattern} \\
\hline
(1,1) & (x_1, x_2) \\
(1,0) & (x_1, 1-x_2) \\
(0,1) & (1-x_1, x_2) \\
(0,0) & (1-x_1, 1-x_2)\\ \hline
\end{array}
\]
where a $1$ means using the original angle and $0$ means using its complement.
\end{example}

Applying H\"older's inequality (or its reverse) to each of these representations produces, a priori, $2^n$ inequalities; in fact $2^{n-1}$ distinct inequalities.

\subsection{Upper bounds for weighted products}
\label{ssec:upper}

Let $\alpha_1,\ldots,\alpha_n > 0$ and $x_1,\ldots,x_n \in (0,1)$, and set
$\alpha := \sum_{i=1}^{n} \alpha_i > 0$. Write $\mathcal{N} = \{1,\ldots,n\}$.

\begin{definition}\label{def:upper-reflection}
A \emph{reflection set} is any subset $E \subseteq \mathcal N$ such that for
each $i\in\mathcal N$, the associated selected angle $a_i$ satisfies
\[
a_i \;=\; \begin{cases} x_i, & i \notin E,\\ 1-x_i, & i \in E ;\end{cases}
\]
and the associated weighted reflection mean is given by
\begin{equation}\label{eq:Mdef}
\begin{aligned}
M_E(\vec x) &\;=\; \frac{1}{\alpha}\sum_{i=1}^n \alpha_i a_i
= \frac{\displaystyle\sum_{i=1}^{n} \alpha_i x_i - \sum_{i \in E} \alpha_i (2x_i - 1)}{\alpha}.
\end{aligned}
\end{equation}
\end{definition}

\begin{example}
For $n=2$, $\alpha_1=\beta$, $\alpha_2=\gamma$, $x_1=x$, $x_2=y$. Take as reflection sets $E=\emptyset$ and $E=\{2\}$. For the set $\emptyset$, as $1\notin \emptyset$ and $2\notin\emptyset$, both $x=x_1$ and $y=x_2$ are kept and the reflection mean is
\[
M_\emptyset = \frac{\beta x + \gamma y}{\beta+\gamma}.\]
For the set $\{2\}$, we have that $1\notin\{2\}$ and $2\in\{2\}$, so $x=x_1$ is kept while $y=x_1$ is flipped to $1-y$. The reflection mean is then
\[
M_{\{2\}} = \frac{\beta x + \gamma(1-y)}{\beta+\gamma}.
\]
The other sets $E=\{1\}$ and $E=\{1,2\}$ give complementary means $1-M_\emptyset$ and $1-M_{\{2\}}$, which yield the same sine values.
\end{example}

Our main theorem on upper bounds is as follows.

\begin{theorem}\label{thm:upper-main}
Let $\alpha_1,\ldots,\alpha_n > 0$ and $x_1,\ldots,x_n \in (0,1)$. Let
$E \subseteq \mathcal N$ be any reflection set with $0 < M_E(\vec x) < 1$. Then
\begin{equation}\label{eq:upperbound}
\prod_{i=1}^{n} \sin^{\alpha_i}(\pi x_i) \;\leq\; \sin^{\alpha}(\pi M_E(\vec x)).
\end{equation}
\end{theorem}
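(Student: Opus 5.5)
The plan is to reduce the claim to the Beta-integral form of the reflection formula and then apply H\"older's inequality, Lemma~\ref{lem:holder}. First, since $\sin(\pi x_i)=\sin(\pi(1-x_i))=\sin(\pi a_i)$ for every $i$, the left-hand side of \eqref{eq:upperbound} equals $\prod_{i=1}^n \sin^{\alpha_i}(\pi a_i)$. Each $a_i$ lies in $(0,1)$, and by hypothesis $M_E=\frac1\alpha\sum\alpha_i a_i\in(0,1)$. Taking reciprocals and using \eqref{eq:reflection}, the claim becomes
\[
\prod_{i=1}^n B(a_i,1-a_i)^{\alpha_i/\alpha}\;\ge\; B(M_E,1-M_E).
\]
Here both sides have been divided by $\pi^\alpha$ and raised to the power $1/\alpha$.

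Next I set $p_i=\alpha/\alpha_i$, so that $\sum 1/p_i=1$ and $p_i\in(1,\infty)$ when $n\ge2$. The case $n=1$ is trivial, since then $M_E=a_1$. I then take
\[
f_i(t)=\bigl(t^{a_i-1}(1-t)^{-a_i}\bigr)^{\alpha_i/\alpha}
\]
on $(0,1)$ with Lebesgue measure. The product $\prod f_i(t)$ equals $t^{M_E-1}(1-t)^{-M_E}$ because the exponents combine linearly: $\sum(\alpha_i/\alpha)(a_i-1)=M_E-1$ and $\sum(\alpha_i/\alpha)(-a_i)=-M_E$. Hence $\int_0^1\prod f_i\,dt=B(M_E,1-M_E)$. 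On the other side, $\int_0^1 f_i^{p_i}\,dt=B(a_i,1-a_i)$. Lemma~\ref{lem:holder} then gives
\[
B(M_E,1-M_E)\le\prod_i B(a_i,1-a_i)^{\alpha_i/\alpha},
\]
which is exactly the reduced inequality. All integrals are finite because every exponent pair lies in $(0,1)$, and this is precisely where the hypothesis $0<M_E<1$ is used.

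Finally I invert and raise to the power $\alpha$. The only real content is the observation that the reflection choice at each coordinate leaves the left side unchanged while changing the mean. So no step is a serious obstacle; the point needing care is checking that the H\"older exponents and the integrability conditions are legitimate.

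An alternative check is available without the Beta function. Concavity of $\log\sin(\pi\,\cdot)$ on $(0,1)$ together with Jensen's inequality, applied to the points $a_i$ with weights $\alpha_i/\alpha$, gives the same result directly. I would mention this as a remark.
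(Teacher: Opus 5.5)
Your proposal is correct and is essentially the paper's proof: both write $\pi/\sin(\pi M_E)$ as a Beta integral and apply H\"older with $p_i=\alpha/\alpha_i$, with each factor integral reducing to $\pi/\sin(\pi x_i)$ by the reflection formula. The differences are minor: the paper uses the measure $t^{-1}(1-t)^{-1}\,dt$ with $f_i=[t^{a_i}(1-t)^{1-a_i}]^{1/p_i}$, whereas you absorb that factor into the $f_i$ and use Lebesgue measure; your separate treatment of $n=1$ (where $p_1=1$) fills a small gap the paper leaves; and the hypothesis $0<M_E<1$ that you invoke is in fact automatic, since $M_E$ is a convex combination of the $a_i\in(0,1)$.
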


\begin{proof}
Define the exponents $p_i = \alpha/\alpha_i$ for $i=1,\ldots,n$. Then $\sum_{i=1}^{n} 1/p_i = 1$,
and since $\alpha=\sum_j\alpha_j>\alpha_i$ for each $i$, every $p_i\in(1,\infty)$.

Put $M = M_E(\vec x)$ and $N = 1 - M$. Using the integral representation of the Beta function, we get
\[
B(M,N) = \int_{0}^{1} t^{M-1}(1-t)^{-M}\,dt
= \int_{0}^{1} \prod_{i=1}^{n} \bigl[t^{a_i}(1-t)^{1-a_i}\bigr]^{1/p_i} \cdot t^{-1}(1-t)^{-1}\,dt,
\]
where $a_i$ is the angle selected by $E$ as in Definition~\ref{def:upper-reflection}. Using~\eqref{eq:Mdef}, the exponent of $t$ sums to $M-1$ and the exponent of $(1-t)$ sums to $-M$.

Applying the H\"older's inequality, we get
\[
B(M,N) \leq \prod_{i=1}^{n} \left(\int_{0}^{1} t^{a_i-1}(1-t)^{-a_i}\,dt\right)^{1/p_i}.
\]
For any $i$, regardless of whether $a_i = x_i$ or $a_i = 1 - x_i$, we have
\[
\int_{0}^{1} t^{a_i-1}(1-t)^{-a_i}\,dt = \frac{\pi}{\sin(\pi x_i)}
\]
using the reflection formula and the symmetry $B(x,y)=B(y,x)$ of the Beta function. Hence,
\[
B(M,N) \leq \prod_{i=1}^{n} \left(\frac{\pi}{\sin(\pi x_i)}\right)^{\alpha_i/\alpha}.
\]
Raising both sides to the power $\alpha > 0$ and using $B(M,N) = \pi/\sin(\pi M)$ gives
\[
\left(\frac{\pi}{\sin(\pi M)}\right)^{\alpha} \leq \prod_{i=1}^{n} \left(\frac{\pi}{\sin(\pi x_i)}\right)^{\alpha_i}.
\]
As $\prod_{i=1}^{n} \pi^{\alpha_i} = \pi^{\alpha}$, this simplifies to~\eqref{eq:upperbound}. The proof is complete.
\end{proof}

\subsection{Lower bounds for weighted ratios}
\label{ssec:lower}

Let $\alpha_1,\ldots,\alpha_n > 0$ and $x_1,\ldots,x_n \in (0,1)$. Fix an integer
$1 \le k < n$ and write $\mathcal{I} = \{1,\ldots,k\}$ (numerator indices) and
$\mathcal{J} = \{k+1,\ldots,n\}$ (denominator indices). Set
\[
\alpha := \sum_{i\in\mathcal I} \alpha_i - \sum_{j\in\mathcal J} \alpha_j > 0,
\]
and assume there is at least one $i_0\in\mathcal I$ with $\alpha_{i_0}>\alpha$; this
guarantees that the exponent $p_{i_0}=\alpha/\alpha_{i_0}$ lies in $(0,1)$, as required to
invoke the reverse H\"older inequality.

\begin{definition}\label{def:lower-reflection}
A \emph{reflection pair} $(E,F)$ is a choice of subsets $E \subseteq \mathcal I$ and
$F\subseteq\mathcal J$ such that for each $i\in \mathcal I$ and $j\in\mathcal J$, the associated selected angles satisfy
\[
a_i = \begin{cases} x_i, & i\in\mathcal I\setminus E\\ 1-x_i, & i\in E\end{cases}
\qquad (i\in\mathcal I),
\]
\[
c_j = \begin{cases} x_j, & j\in\mathcal J\setminus F\\ 1-x_j, & j\in F\end{cases}
\qquad (j\in\mathcal J),
\]
and the associated weighted reflection mean is given by
\begin{equation}\label{eq:MEFdef}
M_{E,F}(\vec x) = \frac{\displaystyle\sum_{i\in\mathcal I}\alpha_i x_i - \sum_{i\in E}\alpha_i(2x_i-1)
- \sum_{j\in\mathcal J}\alpha_j x_j + \sum_{j\in F}\alpha_j(2x_j-1)}{\alpha}.
\end{equation}
\end{definition}

Our main result on lower bounds is as follows.

\begin{theorem}\label{thm:lower-main}
Let $\alpha_1,\ldots,\alpha_n,\alpha$ be as above, and let $(E,F)$ be any reflection pair
with $0<M_{E,F}(\vec x)<1$. Then
\begin{equation}\label{eq:genineq}
\sin^{\alpha}\!\left(\pi M_{E,F}(\vec x)\right)
\;\leq\;
\frac{\displaystyle\prod_{i\in\mathcal I} \sin^{\alpha_i}(\pi x_i)}
     {\displaystyle\prod_{j\in\mathcal J} \sin^{\alpha_j}(\pi x_j)}.
\end{equation}
\end{theorem}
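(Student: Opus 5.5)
We mirror the proof of Theorem~\ref{thm:upper-main}, replacing H\"older's inequality by its reverse form, Lemma~\ref{lem:reverseholder}. Set $p_i=\alpha/\alpha_i$ for $i\in\mathcal I$ and $p_j=-\alpha/\alpha_j$ for $j\in\mathcal J$. Then
\[
\sum_{i\in\mathcal I}\frac1{p_i}+\sum_{j\in\mathcal J}\frac1{p_j}=\frac{\sum_{\mathcal I}\alpha_i-\sum_{\mathcal J}\alpha_j}{\alpha}=1 .
\]
The exponents $p_j$ are negative, and by the standing assumption $p_{i_0}=\alpha/\alpha_{i_0}\in(0,1)$. So the hypotheses of Lemma~\ref{lem:reverseholder} hold; note that $p_i>0$ for $i\in\mathcal I$ is allowed to exceed $1$.

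Put $M=M_{E,F}(\vec x)\in(0,1)$. By \eqref{eq:MEFdef},
\[
\alpha M=\sum_{\mathcal I}\alpha_i a_i-\sum_{\mathcal J}\alpha_j c_j .
\]
Define $f_i(t)=\bigl[t^{a_i-1}(1-t)^{-a_i}\bigr]^{1/p_i}$ for $i\in\mathcal I$ and $f_j(t)=\bigl[t^{c_j-1}(1-t)^{-c_j}\bigr]^{1/p_j}$ for $j\in\mathcal J$, all on $(0,1)$ with Lebesgue measure.

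The exponent of $t$ in $\prod f$ is
\[
\sum_{\mathcal I}\frac{\alpha_i}{\alpha}(a_i-1)-\sum_{\mathcal J}\frac{\alpha_j}{\alpha}(c_j-1)=M-\frac{\sum_{\mathcal I}\alpha_i-\sum_{\mathcal J}\alpha_j}{\alpha}=M-1 .
\]
Similarly, the exponent of $(1-t)$ is $-M$. Hence $\int_0^1\prod f=B(M,1-M)=\pi/\sin(\pi M)$.

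Moreover $f_k^{p_k}=t^{a_k-1}(1-t)^{-a_k}$ (or with $c_k$) in every case, including the negative ones, since $(g^{1/p})^{p}=g$. So each $\int f_k^{p_k}=\pi/\sin(\pi x_k)$. This uses the reflection formula \eqref{eq:reflection} together with $B(x,y)=B(y,x)$, exactly as in the upper-bound proof, whether or not the angle was flipped. In particular all these integrals are finite, as Lemma~\ref{lem:reverseholder} requires.

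Lemma~\ref{lem:reverseholder} then gives
\[
\frac{\pi}{\sin(\pi M)}\ \ge\ \prod_{\mathcal I}\Bigl(\frac{\pi}{\sin \pi x_i}\Bigr)^{\alpha_i/\alpha}\prod_{\mathcal J}\Bigl(\frac{\pi}{\sin\pi x_j}\Bigr)^{-\alpha_j/\alpha}.
\]
Raise both sides to the power $\alpha>0$. The powers of $\pi$ cancel, because the total exponent is $\sum_{\mathcal I}\alpha_i-\sum_{\mathcal J}\alpha_j=\alpha$. Taking reciprocals, with all quantities positive, reverses the inequality and yields \eqref{eq:genineq}.

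The main point to watch is the bookkeeping of signs in the exponent check. For $j\in\mathcal J$ the $1/p_j$ factor is negative, so the angle $c_j$ enters $M$ with a minus sign. This matches \eqref{eq:MEFdef}, since $-\alpha_jc_j=-\alpha_jx_j+\alpha_j(2x_j-1)$ when $j\in F$. The other point is confirming that the reverse H\"older hypotheses allow mixed positive and negative exponents with only one exponent in $(0,1)$, which is how Lemma~\ref{lem:reverseholder} is stated. The integrability of $\prod f$ is automatic because $0<M<1$.
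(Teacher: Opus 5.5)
Your argument is, step for step, the paper's own proof: the same exponents $\alpha/\alpha_i$ and $-\alpha/\alpha_j$, the same factorization of the Beta integrand, and the same appeal to Lemma~\ref{lem:reverseholder}. Your exponent bookkeeping is correct. The trouble is the point you flag at the end. The version of reverse H\"older you cite, which allows several positive exponents as long as one lies in $(0,1)$, is false. Take $(p_1,p_2,p_3)=(\tfrac12,2,-\tfrac23)$, so $\sum 1/p_i=2+\tfrac12-\tfrac32=1$, on $[0,1]$ with $f_1=f_3\equiv 1$. The lemma would then assert $\int_0^1 f_2\ge(\int_0^1 f_2^2)^{1/2}$, and Cauchy--Schwarz shows this fails for every nonconstant $f_2$. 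What is true is the case of exactly one positive exponent $p_1$ (automatically in $(0,1)$) with all others negative. It follows from Lemma~\ref{lem:holder} applied to $\int f_1^{p_1}=\int(\prod_k f_k)^{p_1}\prod_{k\ge2}f_k^{-p_1}$ with exponents $1/p_1$ and $-p_k/p_1$, whose reciprocals sum to $1$. In your setting the number of positive exponents is $|\mathcal I|=k$, so your proof, and the paper's, is valid only for $k=1$.

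For $k\ge2$ no argument can succeed, because the statement itself fails. Take $n=3$, $k=2$ and $(\alpha_1,\alpha_2,\alpha_3)=(1,1,\tfrac32)$, so $\alpha=\tfrac12<\alpha_1$ and the standing hypothesis holds. Take $(x_1,x_2,x_3)=(\tfrac14,\tfrac34,\tfrac12)$ and $E=F=\emptyset$. Then $M_{E,F}(\vec x)=\tfrac12$, the left side of \eqref{eq:genineq} is $1$, and the right side is $\sin(\pi/4)\sin(3\pi/4)=\tfrac12$. Here both positive exponents equal $\tfrac12\in(0,1)$, so the failure comes from having two positive exponents, not from one exceeding $1$. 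More generally, for any admissible weights, choose $x_3$ to be the $(\alpha_1,\alpha_2)$-weighted mean of $x_1,x_2$. Then $M_{\emptyset,\emptyset}=x_3$, and \eqref{eq:genineq} becomes $\sin^{\alpha_1+\alpha_2}(\pi x_3)\le\sin^{\alpha_1}(\pi x_1)\sin^{\alpha_2}(\pi x_2)$. This contradicts \eqref{eq:prodineqsine}, which is strict when $x_1\neq x_2$. So the theorem must be restricted to a single numerator index. There your proof goes through verbatim once the lemma is replaced by the one-positive-exponent version. That restricted result covers Corollary~\ref{cor:2d-lower} and Corollary~\ref{cor:3d-lower-mminus}, but not Corollary~\ref{cor:3d-lower-ppminus}.
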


\begin{proof}
Define exponents $p_i = \alpha/\alpha_i$ for $i\in\mathcal I$ and $q_j = -\alpha/\alpha_j$
for $j\in\mathcal J$. Then $\sum_{i\in\mathcal I} 1/p_i + \sum_{j\in\mathcal J} 1/q_j = 1$,
and $p_{i_0}\in(0,1)$.

Put $M = M_{E,F}(\vec x)$ and $N = 1-M$. Using the integral representation of the Beta
function, we obtain
\begin{align*}
B(M,N) &= \int_0^1 t^{M-1}(1-t)^{-M}\,dt \\
&= \int_0^1 \prod_{i\in\mathcal I} \bigl[ t^{a_i}(1-t)^{1-a_i} \bigr]^{1/p_i}
         \prod_{j\in\mathcal J} \bigl[ t^{c_j}(1-t)^{1-c_j} \bigr]^{1/q_j}
         \cdot t^{-1}(1-t)^{-1} \, dt,
\end{align*}
with $a_i,c_j$ as in Definition~\ref{def:lower-reflection}. As before, the exponent of $t$
sums to $M-1$ and the exponent of $(1-t)$ sums to $-M$, by~\eqref{eq:MEFdef}.

Applying Lemma~\ref{lem:reverseholder}, we obtain
\[
B(M,N) \ge \prod_{i\in\mathcal I} \left( \int_0^1 t^{a_i-1}(1-t)^{-a_i} \, dt \right)^{1/p_i}
         \prod_{j\in\mathcal J} \left( \int_0^1 t^{c_j-1}(1-t)^{-c_j} \, dt \right)^{1/q_j}.
\]
As in the proof of Theorem~\ref{thm:upper-main}, every one of these integrals equals
$\pi/\sin(\pi x_i)$ (resp. $\pi/\sin(\pi x_j)$), regardless of the reflection choice. Hence,
\[
B(M,N) \ge \prod_{i\in\mathcal I} \left( \frac{\pi}{\sin(\pi x_i)} \right)^{\alpha_i/\alpha}
         \prod_{j\in\mathcal J} \left( \frac{\pi}{\sin(\pi x_j)} \right)^{-\alpha_j/\alpha}.
\]
Raising both sides to the power $\alpha>0$ and using $B(M,N)=\pi/\sin(\pi M)$ gives
\[
\left(\frac{\pi}{\sin(\pi M)}\right)^{\alpha} \ge \prod_{i\in\mathcal I} \left( \frac{\pi}{\sin(\pi x_i)} \right)^{\alpha_i}
         \prod_{j\in\mathcal J} \left( \frac{\pi}{\sin(\pi x_j)} \right)^{-\alpha_j}.
\]
This reduces to~\eqref{eq:genineq} since $\displaystyle\prod_{i\in\mathcal I}\pi^{\alpha_i}\prod_{j\in\mathcal J}\pi^{-\alpha_j}=\pi^\alpha$.
The proof is complete.
\end{proof}

\begin{remark}\label{rmk:counting}
In both constructions, replacing a reflection set (resp. pair) by its complement, i.e. 
$E$ by $\mathcal N\setminus E$ (resp. $(E,F)$ by $(\mathcal I\setminus E,\ \mathcal J\setminus F)$), 
sends $M$ to $1-M$, giving the same sine value since $\sin(\pi M)=\sin(\pi(1-M))$. Hence the
$2^n$ reflection choices pair up into $2^{n-1}$ complementary classes, each yielding a single
distinct bound. 

For example, for $n=2$, we obtain two distinct bounds; for $n=3$, we get $2^{3-1}=4$ bounds.
\end{remark}


\section{Comparing the bounds}
\label{sec:compare}

In this section, we consider restrictions of Theorems~\ref{thm:upper-main} and~\ref{thm:lower-main} to the cases $n=2$ and
$n=3$, and determine exactly which member of the resulting family of bounds is the sharpest.

\subsection{Two-dimensional upper bounds}

Let $\alpha_1 = \beta$, $\alpha_2 = \gamma$, $x_1 = x$, $x_2 = y$, and $\alpha = \beta + \gamma > 0$. By Remark~\ref{rmk:counting}, the $2^2=4$ reflection sets collapse to $2$ distinct bounds, attained at $E=\emptyset$ and $E=\{2\}$. Put
\[
U = \frac{\beta x + \gamma y}{\beta + \gamma}, \qquad V = \frac{\beta x + \gamma(1-y)}{\beta + \gamma}.
\]

\begin{corollary}\label{cor:2d-upper}
Let $\beta,\gamma > 0$ and $x,y \in (0,1)$. Assume $U,V \in (0,1)$. Then
\begin{equation}\label{eq:2d-upper}
\sin^{\beta}(\pi x) \sin^{\gamma}(\pi y) \leq \min\left\{\sin^{\beta+\gamma}(\pi U), \sin^{\beta+\gamma}(\pi V)\right\}.
\end{equation}
Moreover,
\[
\sin(\pi U) \geq \sin(\pi V) \iff (1 - 2y)(2x - 1) \geq 0.
\]
\end{corollary}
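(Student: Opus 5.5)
The inequality \eqref{eq:2d-upper} is immediate from Theorem~\ref{thm:upper-main} with $n=2$, $\alpha_1=\beta$, $\alpha_2=\gamma$. The reflection set $E=\emptyset$ gives $M_\emptyset=U$, and $E=\{2\}$ gives $M_{\{2\}}=V$. Both lie in $(0,1)$ by hypothesis, so each of $\sin^{\beta+\gamma}(\pi U)$ and $\sin^{\beta+\gamma}(\pi V)$ is an upper bound for $\sin^\beta(\pi x)\sin^\gamma(\pi y)$. Hence the product is bounded by their minimum. No further work is needed here beyond noting that $\sin(\pi U),\sin(\pi V)>0$, so raising to the power $\beta+\gamma>0$ is monotone.

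For the comparison criterion, I will apply Lemma~\ref{lem:sine-compare}. Clearing the common denominator, write $U=\sigma/\lambda$ and $V=\tau/\lambda$ with
\[
\lambda=\beta+\gamma,\qquad \sigma=\beta x+\gamma y,\qquad \tau=\beta x+\gamma(1-y).
\]
Since $U,V\in(0,1)$, we have $\sigma,\tau\in(0,\lambda)$. The lemma then gives $\sin(\pi U)\ge\sin(\pi V)$ if and only if $(\sigma-\tau)(\sigma+\tau-\lambda)\le 0$.

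It remains to compute the two factors:
\[
\sigma-\tau=\gamma(2y-1),\qquad \sigma+\tau-\lambda=2\beta x+\gamma-\beta-\gamma=\beta(2x-1).
\]
Thus the condition becomes $\beta\gamma(2y-1)(2x-1)\le 0$. Dividing by $\beta\gamma>0$, this is equivalent to $(1-2y)(2x-1)\ge 0$, as claimed.

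The only point needing care is the sign bookkeeping: we must check that $\sigma+\tau-\lambda$ really collapses to $\beta(2x-1)$, with the $\gamma$ terms cancelling. It is also worth noting that the equality cases ($x=\tfrac12$ or $y=\tfrac12$) match the equality clause of the lemma.
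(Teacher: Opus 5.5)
Your proposal is correct and follows the paper's proof essentially verbatim: the inequality comes from Theorem~\ref{thm:upper-main} at $E=\emptyset$ and $E=\{2\}$, and the comparison comes from Lemma~\ref{lem:sine-compare} after computing the difference $\gamma(2y-1)$ and the excess $\beta(2x-1)$. The only cosmetic difference is that you apply the lemma to the numerators with $\lambda=\beta+\gamma$, whereas the paper works directly with $U,V$ (i.e.\ $\lambda=1$) and carries the harmless positive factor $\beta\gamma/(\beta+\gamma)^2$.
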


\begin{proof}
The inequality (\ref{eq:2d-upper}) follows from Theorem~\ref{thm:upper-main}. For the comparison, we note that
\[
U - V = \frac{\gamma(2y - 1)}{\beta + \gamma}, \qquad\text{and}\quad
U + V - 1 = \frac{\beta(2x - 1)}{\beta + \gamma}.
\]
Hence, $(U - V)(U + V - 1) = \frac{\beta\gamma}{(\beta + \gamma)^2}(2y - 1)(2x - 1)$. The result then follows from Lemma~\ref{lem:sine-compare}.
\end{proof}

\begin{corollary}\label{cor:2d-upper-best}
Under the hypotheses of Corollary~\ref{cor:2d-upper}, the best upper bound is
\[
\min\{\sin^{\beta+\gamma}(\pi U), \sin^{\beta+\gamma}(\pi V)\}
=
\begin{cases}
\sin^{\beta+\gamma}(\pi U), & \text{if } (1 - 2y)(2x - 1) \leq 0, \\
\sin^{\beta+\gamma}(\pi V), & \text{if } (1 - 2y)(2x - 1) \geq 0.
\end{cases}
\]
Equality occurs when $y = \frac{1}{2}$ ($U=V$) or $x = \frac{1}{2}$ ($U=1-V$), both giving $\sin(\pi U)=\sin(\pi V)$.
\end{corollary}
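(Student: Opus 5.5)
The plan is to read this off Corollary~\ref{cor:2d-upper}. First, I need the order of the two bounds, not just of the two sines. Since $U,V\in(0,1)$, both $\sin(\pi U)$ and $\sin(\pi V)$ are positive. The map $s\mapsto s^{\beta+\gamma}$ is strictly increasing on $(0,\infty)$ because $\beta+\gamma>0$. Hence
\[
\min\{\sin^{\beta+\gamma}(\pi U),\sin^{\beta+\gamma}(\pi V)\}=\bigl(\min\{\sin(\pi U),\sin(\pi V)\}\bigr)^{\beta+\gamma},
\]
and it is enough to decide which of $\sin(\pi U)$ and $\sin(\pi V)$ is smaller.

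Next I apply Lemma~\ref{lem:sine-compare} with $\lambda=1$, $\sigma=U$, $\tau=V$, reversing the roles where needed. From the proof of Corollary~\ref{cor:2d-upper},
\[
(U-V)(U+V-1)=\frac{\beta\gamma}{(\beta+\gamma)^2}(2y-1)(2x-1)=-\frac{\beta\gamma}{(\beta+\gamma)^2}(1-2y)(2x-1).
\]
The positive factor $\beta\gamma/(\beta+\gamma)^2$ does not affect signs. By the lemma, $\sin(\pi U)\le\sin(\pi V)$ exactly when $(U-V)(U+V-1)\ge 0$, i.e.\ when $(1-2y)(2x-1)\le 0$; in that case the minimum is $\sin^{\beta+\gamma}(\pi U)$. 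Symmetrically, $\sin(\pi U)\ge\sin(\pi V)$ exactly when $(1-2y)(2x-1)\ge 0$, and the minimum is $\sin^{\beta+\gamma}(\pi V)$. On the overlap $(1-2y)(2x-1)=0$ both cases give the same value, so the piecewise formula is consistent.

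For the equality clause I use the equality part of Lemma~\ref{lem:sine-compare}: $\sin(\pi U)=\sin(\pi V)$ if and only if $U=V$ or $U+V=1$. From the displayed formulas for $U-V$ and $U+V-1$, and since $\beta,\gamma>0$, $U=V$ holds exactly when $y=\tfrac12$, and $U+V=1$ holds exactly when $x=\tfrac12$.

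There is no genuine obstacle here. The only point needing care is the sign bookkeeping: the statement is phrased with $(1-2y)$, while the identity involves $(2y-1)$, and the lemma's "$\le 0$" condition has to be matched to the correct branch of the minimum.
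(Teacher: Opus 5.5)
Your proposal is correct and follows the paper's route. The paper gives no separate proof of this corollary; it reads it off Corollary~\ref{cor:2d-upper}, whose proof computes $U-V$ and $U+V-1$ and applies Lemma~\ref{lem:sine-compare}. Your extra steps (monotonicity of $s\mapsto s^{\beta+\gamma}$, and the lemma's equality clause giving exactly $y=\tfrac12$ or $x=\tfrac12$) make explicit what the paper leaves implicit, and your matching of $(1-2y)(2x-1)\le 0$ to the branch $\sin^{\beta+\gamma}(\pi U)$ is right.
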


\subsection{Two-dimensional lower bounds}

Let $k=1$, $\alpha_1 = \beta$, $\alpha_2 = \alpha$ (with $\beta > \alpha > 0$),
$x_1 = x$, $x_2 = y$, so $\mathcal I=\{1\}$, $\mathcal J=\{2\}$, and the excess
weight is $\beta-\alpha>0$. The $2^2=4$ reflection pairs reduce again to $2$ distinct
bounds, attained at $(E,F)=(\emptyset,\emptyset)$ and $(E,F)=(\emptyset,\{2\})$. Put
\[
U = \frac{\beta x - \alpha y}{\beta-\alpha}, \qquad
V = \frac{\beta x - \alpha(1-y)}{\beta-\alpha}.
\]

\begin{corollary}\label{cor:2d-lower}
For $\beta>\alpha>0$ and $x,y\in(0,1)$ with $U,V\in(0,1)$,
\begin{equation}\label{eq:2d-lower}
\max\left\{
  \sin^{\beta-\alpha}(\pi U),\;
  \sin^{\beta-\alpha}(\pi V)
\right\}
\leq \frac{\sin^\beta(\pi x)}{\sin^\alpha(\pi y)}.
\end{equation}
Moreover,
\[
\sin(\pi U) \ge \sin(\pi V) \;\Longleftrightarrow\; (1-2y)(2x-1) \le 0.
\]
\end{corollary}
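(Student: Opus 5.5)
The plan mirrors the proof of Corollary~\ref{cor:2d-upper}: derive the two bounds from Theorem~\ref{thm:lower-main}, then compare them with Lemma~\ref{lem:sine-compare}.

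\emph{Step 1: the two bounds.} Take $k=1$, $\mathcal I=\{1\}$, $\mathcal J=\{2\}$, weights $\alpha_1=\beta$, $\alpha_2=\alpha$. The excess weight is $\beta-\alpha>0$. Since $\beta>\beta-\alpha$, the index $i_0=1$ satisfies the hypothesis of the reverse H\"older setting. From \eqref{eq:MEFdef}:
\begin{itemize}
\item the pair $(\emptyset,\emptyset)$ gives $M_{\emptyset,\emptyset}=(\beta x-\alpha y)/(\beta-\alpha)=U$;
\item the pair $(\emptyset,\{2\})$ gives $M_{\emptyset,\{2\}}=(\beta x-\alpha y+\alpha(2y-1))/(\beta-\alpha)=(\beta x-\alpha(1-y))/(\beta-\alpha)=V$.
\end{itemize}
Under the hypothesis $U,V\in(0,1)$, Theorem~\ref{thm:lower-main} gives both $\sin^{\beta-\alpha}(\pi U)$ and $\sin^{\beta-\alpha}(\pi V)$ as lower bounds for $\sin^\beta(\pi x)/\sin^\alpha(\pi y)$. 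Hence their maximum is also a lower bound, which is \eqref{eq:2d-lower}.

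\emph{Step 2: the comparison.} Apply Lemma~\ref{lem:sine-compare} with $\lambda=1$, $\sigma=U$, $\tau=V$, both lying in $(0,1)$. Then $\sin(\pi U)\ge\sin(\pi V)$ holds if and only if $(U-V)(U+V-1)\le 0$. Direct computation gives
\[
U-V=\frac{\alpha(1-2y)}{\beta-\alpha},\qquad U+V-1=\frac{2\beta x-\alpha-(\beta-\alpha)}{\beta-\alpha}=\frac{\beta(2x-1)}{\beta-\alpha}.
\]
So
\[
(U-V)(U+V-1)=\frac{\alpha\beta}{(\beta-\alpha)^2}(1-2y)(2x-1).
\]
The prefactor is strictly positive, so the sign condition becomes exactly $(1-2y)(2x-1)\le 0$.

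\emph{Main obstacle.} The main risk is sign bookkeeping. First, $U-V$ carries a factor $(1-2y)$ here, whereas in the upper-bound case it was $(2y-1)$. This flip is why the criterion matches the upper-bound case in form but runs in the opposite direction. Second, the denominator $\beta-\alpha$ must be positive. The rest is routine.
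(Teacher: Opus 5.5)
Your proposal is correct and follows the paper's proof: you apply Theorem~\ref{thm:lower-main} to the pairs $(\emptyset,\emptyset)$ and $(\emptyset,\{2\})$, then compute $U-V$ and $U+V-1$ and invoke Lemma~\ref{lem:sine-compare}. Your prefactor $\alpha\beta/(\beta-\alpha)^2$ is the correct one; the paper's displayed $\alpha\beta/(\beta-\alpha)$ is a harmless typo, since only the sign matters.
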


\begin{proof}
The inequality (\ref{eq:2d-lower}) follows from applying Theorem~\ref{thm:lower-main} to the two reflection pairs. For the comparison, we obtain
\[
U - V = \frac{\alpha(1-2y)}{\beta-\alpha}, \qquad
U + V - 1 = \frac{\beta(2x-1)}{\beta-\alpha}.
\]
Hence, $(U-V)(U+V-1)=\frac{\alpha\beta}{\beta-\alpha}(1-2y)(2x-1)$. Thus the result follows from Lemma~\ref{lem:sine-compare}.
\end{proof}

\begin{corollary}\label{cor:2d-lower-best}
Under the hypotheses of Corollary~\ref{cor:2d-lower}, the best lower bound is
\[
\max\{\sin^{\beta-\alpha}(\pi U),\sin^{\beta-\alpha}(\pi V)\}
=
\begin{cases}
\sin^{\beta-\alpha}(\pi U), & \text{if } (1-2y)(2x-1) \le 0,\\
\sin^{\beta-\alpha}(\pi V), & \text{if } (1-2y)(2x-1) \ge 0.
\end{cases}
\]
Equality occurs when $y = \frac{1}{2}$ or $x = \frac{1}{2}$.
\end{corollary}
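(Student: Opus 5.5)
The statement to prove is Corollary~\ref{cor:2d-lower-best}. The plan is to read it off Corollary~\ref{cor:2d-lower}, using only that $t\mapsto t^{\beta-\alpha}$ is increasing on $[0,\infty)$ since $\beta-\alpha>0$. Both $U,V\in(0,1)$, so $\sin(\pi U)$ and $\sin(\pi V)$ are positive. Hence
\[
\max\{\sin^{\beta-\alpha}(\pi U),\sin^{\beta-\alpha}(\pi V)\}=\bigl(\max\{\sin(\pi U),\sin(\pi V)\}\bigr)^{\beta-\alpha},
\]
and the task reduces to deciding which of $\sin(\pi U)$ and $\sin(\pi V)$ is larger.

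That is exactly the equivalence in Corollary~\ref{cor:2d-lower}: $\sin(\pi U)\ge\sin(\pi V)$ iff $(1-2y)(2x-1)\le 0$, which gives the first case. For the second case I would not use the contrapositive, since a strict inequality would lose the boundary. Instead I go back to Lemma~\ref{lem:sine-compare} with $\lambda=1$, $\sigma=V$, $\tau=U$. It gives $\sin(\pi V)\ge\sin(\pi U)$ iff $(V-U)(U+V-1)\le 0$. By the identities in the proof of Corollary~\ref{cor:2d-lower}, this product is a positive multiple of $-(1-2y)(2x-1)$, since $\alpha\beta/(\beta-\alpha)^2>0$. So the condition is $(1-2y)(2x-1)\ge 0$, giving the second case including its boundary.

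On the overlap $(1-2y)(2x-1)=0$ the two sine values coincide, so the two cases are consistent. This is the equality claim, which I would check directly:
\begin{itemize}
\item If $y=\tfrac12$, then $U-V=\alpha(1-2y)/(\beta-\alpha)=0$, so $U=V$.
\item If $x=\tfrac12$, then $U+V-1=\beta(2x-1)/(\beta-\alpha)=0$, so $V=1-U$ and $\sin(\pi V)=\sin(\pi U)$.
\end{itemize}
Lemma~\ref{lem:sine-compare} also says that equality of the sines happens only when $U=V$ or $U+V=1$, so these are the only equality cases.

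There is no real obstacle here. The one point needing care is the sign bookkeeping: the lower-bound identities are written with $(1-2y)$, whereas the upper-bound case used $(2y-1)$. The displayed identity in Corollary~\ref{cor:2d-lower} also has denominator $\beta-\alpha$ where it should be $(\beta-\alpha)^2$, which is harmless because only the sign of the constant matters.
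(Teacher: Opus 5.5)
Your proposal is correct and follows essentially the same route as the paper: the paper gives no separate proof, since the result is read off from Corollary~\ref{cor:2d-lower}. That corollary rests on Lemma~\ref{lem:sine-compare} applied to the identities for $U-V$ and $U+V-1$, and you combine it, as intended, with the monotonicity of $t\mapsto t^{\beta-\alpha}$ on positive reals. Your treatment of the second case (applying the lemma with $\sigma=V$, $\tau=U$) and your observation that the constant should be $\alpha\beta/(\beta-\alpha)^2$ are both accurate.
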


\subsection{Three-dimensional upper bounds}

Let $\alpha_1 = \beta$, $\alpha_2 = \gamma$, $\alpha_3 = \delta$, and $x_1 = x$, $x_2 = y$, $x_3 = z$, so $\alpha = \beta + \gamma + \delta > 0$. By Remark~\ref{rmk:counting}, the $2^3=8$ reflection sets reduce to $2^{3-1}=4$ distinct bounds, attained at $E=\emptyset,\{3\},\{2\},\{1\}$. Define
\[
U_{00} = \frac{\beta x + \gamma y + \delta z}{\alpha},\quad
U_{01} = \frac{\beta x + \gamma y + \delta(1-z)}{\alpha},
\]
\[
U_{10} = \frac{\beta x + \gamma(1-y) + \delta z}{\alpha},\quad
U_{11} = \frac{\beta(1-x) + \gamma y + \delta z}{\alpha}.
\]

\begin{corollary}\label{cor:3d-upper}
Under the above hypotheses with all $U_{ab} \in (0,1)$,
\[
\sin^{\beta}(\pi x) \sin^{\gamma}(\pi y) \sin^{\delta}(\pi z) \leq \min\left\{\sin^{\alpha}(\pi U_{00}), \sin^{\alpha}(\pi U_{01}), \sin^{\alpha}(\pi U_{10}), \sin^{\alpha}(\pi U_{11})\right\}.
\]
\end{corollary}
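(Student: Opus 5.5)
The plan is to obtain Corollary~\ref{cor:3d-upper} as four separate applications of Theorem~\ref{thm:upper-main} with $n=3$, weights $(\alpha_1,\alpha_2,\alpha_3)=(\beta,\gamma,\delta)$ and variables $(x_1,x_2,x_3)=(x,y,z)$. I will then take the minimum of the four resulting upper bounds. No new analytic input is needed; the whole argument is bookkeeping between the reflection sets of Definition~\ref{def:upper-reflection} and the four means $U_{ab}$.

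First I will check that each $U_{ab}$ equals $M_E(\vec x)$ for the reflection set indicated before the statement. By \eqref{eq:Mdef}, $M_E=\frac{1}{\alpha}\sum_i \alpha_i a_i$, where $a_i=1-x_i$ exactly when $i\in E$. This gives the correspondences
\[
M_\emptyset=U_{00},\qquad M_{\{3\}}=U_{01},\qquad M_{\{2\}}=U_{10},\qquad M_{\{1\}}=U_{11}.
\]
Each one is verified by flipping the single corresponding coordinate, for instance $M_{\{3\}}=(\beta x+\gamma y+\delta(1-z))/\alpha$. The standing hypothesis that every $U_{ab}\in(0,1)$ is precisely the condition $0<M_E(\vec x)<1$ that Theorem~\ref{thm:upper-main} requires.

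Next I apply Theorem~\ref{thm:upper-main} to each of these four $E$. This yields
\[
\sin^\beta(\pi x)\sin^\gamma(\pi y)\sin^\delta(\pi z)\le \sin^\alpha(\pi U_{ab})
\]
for every $ab\in\{00,01,10,11\}$. Since the left-hand side is a single quantity bounded above by each of the four right-hand sides, it is bounded above by their minimum, which is the claim.

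For completeness I will add a remark via Remark~\ref{rmk:counting}: the other four reflection sets $\{1,2,3\},\{1,2\},\{1,3\},\{2,3\}$ are complements of these and give means $1-U_{ab}$, hence the same sine values. So the minimum over four bounds is already the minimum over all eight reflection sets. I expect no real obstacle. The only point needing care is the index matching between the subscripts $ab$ and the sets $E$, since the labelling is not the obvious binary one: $U_{11}$ corresponds to $E=\{1\}$, not to $E=\{1,2\}$.
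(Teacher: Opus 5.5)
Your proposal is correct and is essentially the paper's own argument: apply Theorem~\ref{thm:upper-main} to the four reflection sets $E=\emptyset,\{3\},\{2\},\{1\}$ (your matching with $U_{00},U_{01},U_{10},U_{11}$ is right), then take the minimum of the four right-hand sides. Your remark that the complementary sets $\{1,2,3\},\{1,2\},\{1,3\},\{2,3\}$ add no new bounds is consistent with Remark~\ref{rmk:counting}.
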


\begin{proof}
Apply Theorem~\ref{thm:upper-main} to the four reflection sets; taking the minimum of the right-hand sides preserves the inequality.
\end{proof}

\begin{lemma}\label{lem:3d-upper-compare}
With the means defined above, we have the following pairwise comparisons.
\begin{align}
\sin(\pi U_{00}) &\geq \sin(\pi U_{01}) \iff (2z - 1)\left(\beta(2x-1) + \gamma(2y-1)\right) \leq 0, \tag{i} \\
\sin(\pi U_{00}) &\geq \sin(\pi U_{10}) \iff (2y - 1)\left(\beta(2x-1) + \delta(2z-1)\right) \leq 0, \tag{ii} \\
\sin(\pi U_{00}) &\geq \sin(\pi U_{11}) \iff (2x - 1)\left(\gamma(2y-1) + \delta(2z-1)\right) \leq 0, \tag{iii} \\
\sin(\pi U_{01}) &\geq \sin(\pi U_{11}) \iff (2y - 1)\left(\beta(2x-1) + \delta(1-2z)\right) \leq 0, \tag{iv} \\
\sin(\pi U_{10}) &\geq \sin(\pi U_{11}) \iff (2z - 1)\left(\beta(2x-1) + \gamma(1-2y)\right) \leq 0, \tag{v} \\
\sin(\pi U_{01}) &\geq \sin(\pi U_{10}) \iff (2x - 1)\left(\gamma(2y-1) + \delta(1-2z)\right) \leq 0. \tag{vi}
\end{align}
\end{lemma}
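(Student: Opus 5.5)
Each of the six equivalences follows from Lemma~\ref{lem:sine-compare} applied with $\lambda=1$ and $(\sigma,\tau)$ the relevant pair of means, all of which lie in $(0,1)$ by hypothesis. That lemma gives $\sin(\pi\sigma)\ge\sin(\pi\tau)$ if and only if $(\sigma-\tau)(\sigma+\tau-1)\le 0$. So for each pair we only need to compute the difference $\sigma-\tau$ and the shifted sum $\sigma+\tau-1$ as explicit linear combinations of $(2x-1),(2y-1),(2z-1)$. We then simplify the product, as in the proof of Corollary~\ref{cor:2d-upper}.

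The computations use $\alpha=\beta+\gamma+\delta$ to absorb the constant terms. For (i), $U_{00}-U_{01}=\delta(2z-1)/\alpha$. Also $U_{00}+U_{01}-1=\bigl(2\beta x+2\gamma y+\delta-\alpha\bigr)/\alpha=\bigl(\beta(2x-1)+\gamma(2y-1)\bigr)/\alpha$. The product is therefore $\frac{\delta}{\alpha^2}(2z-1)\bigl(\beta(2x-1)+\gamma(2y-1)\bigr)$. Since $\delta/\alpha^2>0$, the sign condition is exactly (i).

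Items (ii) and (iii) follow in the same way by symmetry, permuting the roles of $(x,\beta)$, $(y,\gamma)$, $(z,\delta)$. The difference isolates the flipped coordinate, and the sum minus one collects the other two.

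For the mixed pairs (iv)--(vi), two coordinates are flipped relative to each other. For (iv), compare $U_{01}$ (with $z$ flipped) and $U_{11}$ (with $x$ flipped). Their difference is $\bigl(\beta(2x-1)-\delta(2z-1)\bigr)/\alpha=\bigl(\beta(2x-1)+\delta(1-2z)\bigr)/\alpha$. The sum is $\bigl(\beta+2\gamma y+\delta\bigr)/\alpha$, so the sum minus one is $\gamma(2y-1)/\alpha$. This gives (iv) after dividing by $\gamma/\alpha^2>0$.

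Items (v) and (vi) are handled analogously. For (v), $U_{10}-U_{11}=\bigl(\beta(2x-1)+\gamma(1-2y)\bigr)/\alpha$ and $U_{10}+U_{11}-1=\delta(2z-1)/\alpha$. For (vi), $U_{01}-U_{10}=\bigl(\gamma(2y-1)+\delta(1-2z)\bigr)/\alpha$ and $U_{01}+U_{10}-1=\beta(2x-1)/\alpha$.

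There is no real obstacle; the argument is pure bookkeeping. The only care needed is to keep the orientation of each pair consistent with the stated inequality. The relevant product is always $(\sigma-\tau)(\sigma+\tau-1)$ with $\sigma$ the mean on the left-hand side. Since this product is symmetric in the order of its two factors, the stated forms match directly.
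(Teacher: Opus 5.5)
Your proposal is correct and follows the paper's own argument: for each pair, compute $U_{ab}-U_{cd}$ and $U_{ab}+U_{cd}-1$, then apply Lemma~\ref{lem:sine-compare} with $\lambda=1$. The paper writes this out only for (i); your explicit differences and shifted sums for all six cases check out, including the orientation of the differences in (iv)--(vi).
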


\begin{proof}
Compute $U_{ab}-U_{cd}$ and $U_{ab}+U_{cd}-1$ for each pair of means, then apply Lemma~\ref{lem:sine-compare}. For example, for (i):
\[
U_{00} - U_{01} = \frac{\delta(2z-1)}{\alpha}, \qquad
U_{00} + U_{01} - 1 = \frac{\beta(2x-1) + \gamma(2y-1)}{\alpha}.
\]
The other cases are analogous.
\end{proof}

For clarity, we define the following propositional variables:
\begin{align*}
P &: (2z-1)(\beta(2x-1) + \gamma(2y-1)) \leq 0, \\
Q &: (2y-1)(\beta(2x-1) + \delta(2z-1)) \leq 0, \\
R &: (2x-1)(\gamma(2y-1) + \delta(2z-1)) \leq 0, \\
S &: (2x-1)(\gamma(2y-1) + \delta(1-2z)) \leq 0, \\
T &: (2y-1)(\beta(2x-1) + \delta(1-2z)) \leq 0, \\
V &: (2z-1)(\beta(2x-1) + \gamma(1-2y)) \leq 0.
\end{align*}

\begin{theorem}\label{thm:3d-upper-best}
The best upper bound is given by:
\[
\min\{\sin^{\alpha}(\pi U_{00}), \sin^{\alpha}(\pi U_{01}), \sin^{\alpha}(\pi U_{10}), \sin^{\alpha}(\pi U_{11})\}
\]
\[
=
\begin{cases}
\sin^{\alpha}(\pi U_{00}), & \text{if } \neg P,\ \neg Q,\ \neg R \text{ hold},\\
\sin^{\alpha}(\pi U_{01}), & \text{if } P,\ \neg S,\ \neg T \text{ hold},\\
\sin^{\alpha}(\pi U_{10}), & \text{if }  Q,\ S,\ \neg V \text{ hold},\\
\sin^{\alpha}(\pi U_{11}), & \text{if } V,\ T,\ R \text{ hold}.
\end{cases}
\]
Here $\neg P$ denotes the negation of $P$; similarly for the others.
\end{theorem}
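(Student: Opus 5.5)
The plan is to reduce the statement to the six pairwise comparisons of Lemma~\ref{lem:3d-upper-compare}. Since all $U_{ab}\in(0,1)$, every $\sin(\pi U_{ab})$ is positive, and $s\mapsto s^{\alpha}$ is strictly increasing on $(0,\infty)$ because $\alpha>0$. So the minimum of the four quantities $\sin^{\alpha}(\pi U_{ab})$ is attained at the index $ab$ for which $\sin(\pi U_{ab})$ is smallest. It therefore suffices to show that, in each listed case, the designated $\sin(\pi U_{ab})$ is $\le$ each of the other three.

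First I record the dictionary between the propositional variables and Lemma~\ref{lem:3d-upper-compare}. The conditions $P,Q,R$ are exactly the right-hand sides of (i), (ii), (iii). The conditions $T,V,S$ are exactly the right-hand sides of (iv), (v), (vi). Hence:
\begin{itemize}[label={}]
\item $P\iff \sin(\pi U_{00})\ge\sin(\pi U_{01})$,
\item $Q\iff \sin(\pi U_{00})\ge \sin(\pi U_{10})$,
\item $R\iff \sin(\pi U_{00})\ge\sin(\pi U_{11})$,
\item $T\iff\sin(\pi U_{01})\ge\sin(\pi U_{11})$,
\item $V\iff\sin(\pi U_{10})\ge\sin(\pi U_{11})$,
\item $S\iff \sin(\pi U_{01})\ge\sin(\pi U_{10})$.
\end{itemize}
Negating each equivalence gives a strict reverse inequality, e.g. $\neg P\iff \sin(\pi U_{00})<\sin(\pi U_{01})$.

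Then I check the four cases row by row.
\begin{itemize}[label={}]
\item Row 1: $\neg P,\neg Q,\neg R$ say $\sin(\pi U_{00})$ is strictly below $\sin(\pi U_{01})$, $\sin(\pi U_{10})$ and $\sin(\pi U_{11})$.
\item Row 2: $P$ gives $\sin(\pi U_{01})\le\sin(\pi U_{00})$, $\neg S$ gives $\sin(\pi U_{01})<\sin(\pi U_{10})$, and $\neg T$ gives $\sin(\pi U_{01})<\sin(\pi U_{11})$.
\item Row 3: $Q$, $S$ and $\neg V$ give $\sin(\pi U_{10})\le \sin(\pi U_{00})$, $\sin(\pi U_{10})\le\sin(\pi U_{01})$ and $\sin(\pi U_{10})<\sin(\pi U_{11})$.
\item Row 4: $R$, $T$ and $V$ give $\sin(\pi U_{11})$ at most each of the other three.
\end{itemize}
In each case the designated term is the minimum, which proves the displayed formula.

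The main obstacle is bookkeeping rather than analysis. One must orient each of the six comparisons correctly: for a given pair, decide which member is the "first" one in Lemma~\ref{lem:3d-upper-compare}, and whether the needed condition is the proposition or its negation. For instance, $S$ compares $U_{01}$ with $U_{10}$ from the side of $U_{01}$, so the row for $U_{10}$ uses $S$ itself while the row for $U_{01}$ uses $\neg S$.

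I would also remark on exhaustiveness. The four cases are sufficient conditions, and they cover every configuration up to ties: some $\sin(\pi U_{ab})$ is always minimal, and it satisfies the corresponding row once the strict conditions are relaxed to non-strict ones. When ties occur (equality in Lemma~\ref{lem:sine-compare}), several rows may apply, but they then give the same value, so the formula is consistent.
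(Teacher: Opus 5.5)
Your proof is correct and follows the paper's route: you translate $P,Q,R,S,T,V$ into the six pairwise comparisons of Lemma~\ref{lem:3d-upper-compare} and check, row by row, that the designated term is dominated by the other three. One correction to your closing remark: no configuration is left out and no two rows ever hold at once. Each pair of rows contains some proposition together with its negation, and the mix of strict and non-strict conditions breaks ties in favour of the later index in the order $U_{00},U_{01},U_{10},U_{11}$; this is why the paper's claim that the four cases are mutually exclusive and exhaustive holds even when ties occur.
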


\begin{proof}
For each candidate to be the minimum, it must be dominated by the other three bounds. From Lemma~\ref{lem:3d-upper-compare}, we see that

\begin{itemize}
\item[(i)] $\sin(\pi U_{00})$ is the minimum if and only if $\neg P$, $\neg Q$, $\neg R$ hold;
\item[(ii)] $U_{01}$ is the minimum if and only if $P$, $\neg S$, $\neg T$ hold;
\item[(iii)] $U_{10}$ is the minimum if and only if $ Q$, $\neg V$, $S$ hold;
\item[(iv)] $U_{11}$ is the minimum if and only if $V$, $T$, $R$ hold.
\end{itemize}

These four cases are mutually exclusive and exhaustive.
\end{proof}

\subsection{Three-dimensional lower bounds}

We present two distinct signatures for three-dimensional lower bounds: $(+,-,-)$ and $(+,+,-)$.

\subsubsection{Signature $(+,-,-)$}

Let $k=1$, $\alpha_1=\beta$, $\alpha_2=\gamma$, $\alpha_3=\delta$ with $\beta > \gamma + \delta > 0$, so $\alpha = \beta - \gamma - \delta > 0$, $\mathcal I=\{1\}$, $\mathcal J=\{2,3\}$, $x_1=x$, $x_2=y$, $x_3=z$. The four reflection pairs giving distinct bounds are $(E,F)=(\emptyset,\emptyset),(\emptyset,\{2\}),(\emptyset,\{3\}),(\{1\},\emptyset)$. Define
\[
U_{00} = \frac{\beta x - \gamma y - \delta z}{\alpha},\quad
U_{01} = \frac{\beta x - \gamma(1-y) - \delta z}{\alpha},
\]
\[
U_{10} = \frac{\beta x - \gamma y - \delta(1-z)}{\alpha},\quad
U_{11} = \frac{\beta (1-x) - \gamma y - \delta z}{\alpha}.
\]
We deduce the following from the above hypotheses and Theorem \ref{thm:lower-main}.
\begin{corollary}\label{cor:3d-lower-mminus}
Under the above hypotheses with all $U_{ab} \in (0,1)$,
\[
\max\bigl\{
  \sin^{\alpha}(\pi U_{00}),\;
  \sin^{\alpha}(\pi U_{01}),\;
  \sin^{\alpha}(\pi U_{10}),\;
  \sin^{\alpha}(\pi U_{11})
\bigr\}
\;\leq\; \frac{\sin^\beta(\pi x)}{\sin^\gamma(\pi y)\sin^\delta(\pi z)}.
\]
\end{corollary}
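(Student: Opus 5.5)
The plan is to apply Theorem~\ref{thm:lower-main} four times, once for each of the listed reflection pairs, and then take the maximum of the resulting lower bounds. The setting is $n=3$, $k=1$, $\mathcal I=\{1\}$, $\mathcal J=\{2,3\}$, with weights $\alpha_1=\beta$, $\alpha_2=\gamma$, $\alpha_3=\delta$, so that $\alpha=\beta-\gamma-\delta>0$.

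\textbf{Hypothesis of Theorem~\ref{thm:lower-main}.} The standing assumption requires some $i_0\in\mathcal I$ with $\alpha_{i_0}>\alpha$. Here $\mathcal I=\{1\}$, so we need $\beta>\alpha$. This holds because $\alpha=\beta-\gamma-\delta<\beta$ whenever $\gamma+\delta>0$, and the weights are positive. Hence $p_1=\alpha/\beta\in(0,1)$, and the reverse H\"older inequality (Lemma~\ref{lem:reverseholder}) is available.

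\textbf{Matching each $U_{ab}$ with a mean $M_{E,F}$.} Using \eqref{eq:MEFdef}:
\begin{itemize}
\item For $(E,F)=(\emptyset,\emptyset)$, we get $M=(\beta x-\gamma y-\delta z)/\alpha=U_{00}$.
\item For $(E,F)=(\emptyset,\{2\})$, the term $+\gamma(2y-1)$ turns $-\gamma y$ into $-\gamma(1-y)$, so $M=U_{01}$.
\item For $(E,F)=(\emptyset,\{3\})$, the same computation on the $z$-term gives $M=U_{10}$.
\item For $(E,F)=(\{1\},\emptyset)$, the term $-\beta(2x-1)$ turns $\beta x$ into $\beta(1-x)$, so $M=U_{11}$.
\end{itemize}

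\textbf{Applying the theorem.} Each $U_{ab}$ lies in $(0,1)$ by assumption. Theorem~\ref{thm:lower-main} then gives
\[
\sin^{\alpha}(\pi U_{ab})\le \frac{\sin^\beta(\pi x)}{\sin^\gamma(\pi y)\sin^\delta(\pi z)}
\]
for each of the four indices. The quantity on the right is the same in all four cases, since the reflection choice does not affect it. Therefore the maximum of the four left-hand sides is bounded by it, which is the claim.

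There is no real obstacle here. The only points to check are the algebraic identification of each $U_{ab}$ with the corresponding $M_{E,F}$, and the verification that $\beta>\alpha$ so that Theorem~\ref{thm:lower-main} applies.
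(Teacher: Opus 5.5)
Your proposal is correct and follows the paper's route exactly: the paper deduces this corollary directly from Theorem~\ref{thm:lower-main}, applied to the four reflection pairs $(\emptyset,\emptyset),(\emptyset,\{2\}),(\emptyset,\{3\}),(\{1\},\emptyset)$, and then takes the maximum. Your explicit identification of each $U_{ab}$ with the corresponding $M_{E,F}$, and your check that $\beta>\alpha$ (so that $p_1=\alpha/\beta\in(0,1)$), are precisely the details the paper leaves implicit.
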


\begin{lemma}\label{lem:3d-lower-mminus-compare}
With the means defined above, we have:
\begin{align}
\sin(\pi U_{00}) \ge \sin(\pi U_{01}) &\iff (1-2y)\bigl(\beta(2x-1) - \delta(2z-1)\bigr) \le 0, \tag{i}\\
\sin(\pi U_{00}) \ge \sin(\pi U_{10}) &\iff (1-2z)\bigl(\beta(2x-1) - \gamma(2y-1)\bigr) \le 0, \tag{ii}\\
\sin(\pi U_{00}) \ge \sin(\pi U_{11}) &\iff (2x-1)\bigl(\gamma(1-2y) + \delta(1-2z)\bigr) \le 0, \tag{iii}\\
\sin(\pi U_{01}) \ge \sin(\pi U_{11}) &\iff (1-2z)\bigl(\beta(2x-1) + \gamma(2y-1)\bigr) \le 0, \tag{iv}\\
\sin(\pi U_{10}) \ge \sin(\pi U_{11}) &\iff (1-2y)\bigl(\beta(2x-1) + \delta(2z-1)\bigr) \le 0, \tag{v}\\
\sin(\pi U_{01}) \ge \sin(\pi U_{10}) &\iff (2x-1)\bigl(\gamma(2y-1) + \delta(1-2z)\bigr) \le 0. \tag{vi}
\end{align}
\end{lemma}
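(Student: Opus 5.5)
The plan is to repeat the argument of Lemma~\ref{lem:3d-upper-compare}: apply Lemma~\ref{lem:sine-compare} with $\lambda=1$ to each of the six pairs $(U_{ab},U_{cd})$. All $U_{ab}$ are assumed to lie in $(0,1)$, so the lemma applies and gives
\[
\sin(\pi U_{ab})\ge \sin(\pi U_{cd}) \iff (U_{ab}-U_{cd})(U_{ab}+U_{cd}-1)\le 0 .
\]
For each pair it therefore suffices to compute the difference $U_{ab}-U_{cd}$ and the shifted sum $U_{ab}+U_{cd}-1$. We then check that their product equals a positive constant times the expression stated in the lemma. That constant is $\gamma/\alpha^2$, $\delta/\alpha^2$ or $\beta/\alpha^2$, which is positive because $\beta,\gamma,\delta>0$ and $\alpha=\beta-\gamma-\delta>0$.

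The key simplification is to substitute $\alpha=\beta-\gamma-\delta$ whenever the term $-\alpha/\alpha$ appears in $U_{ab}+U_{cd}-1$. For instance, for (i) we get $U_{00}-U_{01}=\gamma(1-2y)/\alpha$. Also
\[
U_{00}+U_{01}-1=\frac{2\beta x-\gamma-2\delta z-(\beta-\gamma-\delta)}{\alpha}=\frac{\beta(2x-1)-\delta(2z-1)}{\alpha},
\]
so the product is $\tfrac{\gamma}{\alpha^2}(1-2y)\bigl(\beta(2x-1)-\delta(2z-1)\bigr)$, which is (i). The remaining cases follow the same pattern, and I expect each of them to produce one variable-free factor:

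\begin{itemize}
\item (ii): $U_{00}-U_{10}=\delta(1-2z)/\alpha$.
\item (iii): $U_{00}-U_{11}=\beta(2x-1)/\alpha$ and $U_{00}+U_{11}-1=\bigl(\gamma(1-2y)+\delta(1-2z)\bigr)/\alpha$.
\item (iv): $U_{01}-U_{11}=\bigl(\beta(2x-1)+\gamma(2y-1)\bigr)/\alpha$ and $U_{01}+U_{11}-1=\delta(1-2z)/\alpha$.
\item (v): symmetric to (iv) with the roles of $y,\gamma$ and $z,\delta$ swapped.
\item (vi): $U_{01}-U_{10}=\bigl(\gamma(2y-1)+\delta(1-2z)\bigr)/\alpha$ and $U_{01}+U_{10}-1=\beta(2x-1)/\alpha$.
\end{itemize}

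In every case the product of the two factors is a positive multiple of the claimed expression, and Lemma~\ref{lem:sine-compare} concludes.

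There is no real obstacle; the only care needed is bookkeeping. First, the cancellation of $-\alpha$ in the sum must use the signature-specific value $\alpha=\beta-\gamma-\delta$ rather than $\beta+\gamma+\delta$. Second, in cases such as (iv) and (vi) the roles of "difference" and "sum" are swapped relative to (i)–(iii): there the variable-free factor comes from the sum rather than the difference. This swap does not matter, since only the sign of the product enters the criterion.
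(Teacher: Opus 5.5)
Your proposal is correct and follows the paper's approach. The paper gives no separate proof of this lemma, but its proof of Lemma~\ref{lem:3d-upper-compare} is exactly this: compute $U_{ab}-U_{cd}$ and $U_{ab}+U_{cd}-1$ using $\alpha=\beta-\gamma-\delta$, then apply Lemma~\ref{lem:sine-compare} with $\lambda=1$. All six of your differences and shifted sums check out. One small wording slip: the factors you call ``variable-free'' (e.g.\ $\gamma(1-2y)/\alpha$) are single-variable factors, which does not affect the argument.
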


The best lower bound is determined by the following theorem and its proof is analogous to Theorem~\ref{thm:3d-upper-best}.

\begin{theorem}\label{thm:3d-lower-mminus-best}
The best lower bound is given by
\[
\max\{\sin^{\alpha}(\pi U_{00}), \sin^{\alpha}(\pi U_{01}), \sin^{\alpha}(\pi U_{10}), \sin^{\alpha}(\pi U_{11})\}
\]
\[
=
\begin{cases}
\sin^{\alpha}(\pi U_{00}), & \text{if } P,\ Q,\ T \text{ hold},\\
\sin^{\alpha}(\pi U_{01}), & \text{if } \neg P,\ R,\ V \text{ hold},\\
\sin^{\alpha}(\pi U_{10}), & \text{if } \neg Q,\ S,\ \neg V \text{ hold},\\
\sin^{\alpha}(\pi U_{11}), & \text{if } \neg R,\ \neg S,\ \neg T \text{ hold},
\end{cases}
\]
where
\begin{align*}
P &: (1-2y)\bigl(\beta(2x-1) - \delta(2z-1)\bigr) \le 0, \\
Q &: (1-2z)\bigl(\beta(2x-1) - \gamma(2y-1)\bigr) \le 0, \\
R &: (1-2z)\bigl(\beta(2x-1) + \gamma(2y-1)\bigr) \le 0, \\
S &: (1-2y)\bigl(\beta(2x-1) + \delta(2z-1)\bigr) \le 0, \\
T &: (2x-1)\bigl(\gamma(1-2y) + \delta(1-2z)\bigr) \le 0, \\
V &: (2x-1)\bigl(\gamma(2y-1) + \delta(1-2z)\bigr) \le 0.
\end{align*}
\end{theorem}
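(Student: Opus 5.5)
The plan is to mirror the proof of Theorem~\ref{thm:3d-upper-best}, with minimum replaced by maximum. There are two stages. First I would establish Lemma~\ref{lem:3d-lower-mminus-compare}, since it is stated without proof. Then I would read off the maximum from the six pairwise comparisons.

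\textbf{Stage 1: the pairwise comparisons.} For each pair of means I compute $U_{ab}-U_{cd}$ and $U_{ab}+U_{cd}-1$ and apply Lemma~\ref{lem:sine-compare} with $\lambda=1$, which is allowed because all $U_{ab}\in(0,1)$. Using $\alpha=\beta-\gamma-\delta$, the relevant identities are:
\begin{align*}
U_{00}-U_{01}&=\frac{\gamma(1-2y)}{\alpha}, & U_{00}+U_{01}-1&=\frac{\beta(2x-1)-\delta(2z-1)}{\alpha},\\
U_{00}-U_{11}&=\frac{\beta(2x-1)}{\alpha}, & U_{00}+U_{11}-1&=\frac{\gamma(1-2y)+\delta(1-2z)}{\alpha},
\end{align*}
and the remaining four pairs are analogous. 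In each case the product $(U_{ab}-U_{cd})(U_{ab}+U_{cd}-1)$ equals a positive constant (a product of weights divided by $\alpha^2$) times the expression appearing in (i)--(vi). Lemma~\ref{lem:sine-compare} then gives each equivalence.

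\textbf{Stage 2: dictionary with the propositions.} Comparing with the list in the theorem:
\begin{itemize}
\item $P$ is (i), i.e.\ $\sin(\pi U_{00})\ge\sin(\pi U_{01})$;
\item $Q$ is (ii), i.e.\ $\sin(\pi U_{00})\ge\sin(\pi U_{10})$;
\item $T$ is (iii), i.e.\ $\sin(\pi U_{00})\ge\sin(\pi U_{11})$;
\item $R$ is (iv), i.e.\ $\sin(\pi U_{01})\ge\sin(\pi U_{11})$;
\item $S$ is (v), i.e.\ $\sin(\pi U_{10})\ge\sin(\pi U_{11})$;
\item $V$ is (vi), i.e.\ $\sin(\pi U_{01})\ge\sin(\pi U_{10})$.
\end{itemize}
Since $\sin(\pi U_{ab})>0$ and $\alpha>0$, the map $s\mapsto s^\alpha$ is increasing. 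So $\sin^\alpha(\pi U_{ab})$ is the maximum exactly when $\sin(\pi U_{ab})$ dominates the other three sines.

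\textbf{Stage 3: reading off each case.} $U_{00}$ is maximal iff $P,Q,T$ hold. $U_{01}$ is maximal iff it beats $U_{00}$ ($\neg P$, read as the reverse inequality), beats $U_{11}$ ($R$), and beats $U_{10}$ ($V$). $U_{10}$ is maximal iff $\neg Q$, $S$ (beats $U_{11}$) and $\neg V$ (beats $U_{01}$). Finally $U_{11}$ is maximal iff $\neg T,\neg R,\neg S$. Because the maximum of four reals is always attained, some case applies, so the list is exhaustive.

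\textbf{Main obstacle: boundary cases.} The main point of care is ties on the zero sets of the linear forms. There $\neg P$ should be understood as the non-strict reverse inequality, and two cases may hold simultaneously. This is harmless, since the corresponding sine values then coincide (by the equality clause of Lemma~\ref{lem:sine-compare}), so the displayed formula is still correct. I would state this convention explicitly, so that "mutually exclusive" is asserted only off the measure-zero equality set.

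The other point needing care is the bookkeeping of sign conventions in (iv)--(vi), e.g.\ the factor $(1-2z)$ versus $(2z-1)$. I would double-check these by the direct computations of Stage 1 rather than by analogy with the upper-bound case.
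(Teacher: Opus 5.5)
Your proposal is correct and follows the paper's route. You establish the six comparisons of Lemma~\ref{lem:3d-lower-mminus-compare} by computing $U_{ab}-U_{cd}$ and $U_{ab}+U_{cd}-1$ and applying Lemma~\ref{lem:sine-compare}, then read off which mean dominates the other three, exactly as in the proof of Theorem~\ref{thm:3d-upper-best}. Your identities check out; the positive constant in each case is a single weight over $\alpha^2$, not a product of weights.

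Your tie convention is harmless but not needed. With $\neg$ read literally as the strict reverse inequality, the case for $U_{ab}$ says its sine strictly exceeds the earlier ones and weakly exceeds the later ones in the order $U_{00},U_{01},U_{10},U_{11}$. So the four cases already form a partition: the first maximiser satisfies exactly its own case.
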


\subsubsection{Signature $(+,+,-)$}

Let $k=2$, $\alpha_1=\beta$, $\alpha_2=\gamma$, $\alpha_3=\delta$ with $\beta+\gamma>\delta>0$, so $\alpha = \beta + \gamma - \delta > 0$, $\mathcal I=\{1,2\}$, $\mathcal J=\{3\}$, $x_1=x$, $x_2=y$, $x_3=z$. The four reflection pairs giving distinct bounds are $(E,F)=(\emptyset,\emptyset),(\{2\},\emptyset),(\{1\},\emptyset),(\{1,2\},\emptyset)$. Define
\[
U_{00} = \frac{\beta x + \gamma y - \delta z}{\alpha},\quad
U_{01} = \frac{\beta x + \gamma(1-y) - \delta z}{\alpha},
\]
\[
U_{10} = \frac{\beta(1-x) + \gamma y - \delta z}{\alpha},\quad
U_{11} = \frac{\beta(1-x) + \gamma(1-y) - \delta z}{\alpha}.
\]
We deduce the following from Theorem \ref{thm:lower-main}.
\begin{corollary}\label{cor:3d-lower-ppminus}
Under the above hypotheses with all $U_{ab} \in (0,1)$,
\[
\max\bigl\{
  \sin^{\alpha}(\pi U_{00}),\;
  \sin^{\alpha}(\pi U_{01}),\;
  \sin^{\alpha}(\pi U_{10}),\;
  \sin^{\alpha}(\pi U_{11})
\bigr\}
\;\leq\; \frac{\sin^\beta(\pi x)\sin^\gamma(\pi y)}{\sin^\delta(\pi z)}.
\]
\end{corollary}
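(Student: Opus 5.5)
The plan is to apply Theorem~\ref{thm:lower-main} four times, once for each of the listed reflection pairs, and then take the maximum. Here $k=2$, $\mathcal I=\{1,2\}$, $\mathcal J=\{3\}$, and the excess weight is $\alpha=\beta+\gamma-\delta>0$.

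The first step is to check the standing hypothesis of Section~\ref{ssec:lower}: there must be some $i_0\in\mathcal I$ with $\alpha_{i_0}>\alpha$. Since $\beta>\alpha$ is equivalent to $\gamma<\delta$, and $\gamma>\alpha$ is equivalent to $\beta<\delta$, this condition does not follow automatically from $\beta+\gamma>\delta$. I will therefore state it explicitly as an assumption: $\delta>\min\{\beta,\gamma\}$. This is exactly what makes $p_{i_0}=\alpha/\alpha_{i_0}\in(0,1)$, so that Lemma~\ref{lem:reverseholder} applies. I expect that the paper implicitly includes this among "the above hypotheses." Flagging it is the only non-routine point.

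Next, for each pair $(E,F)\in\{(\emptyset,\emptyset),(\{2\},\emptyset),(\{1\},\emptyset),(\{1,2\},\emptyset)\}$, I will evaluate \eqref{eq:MEFdef} and confirm that it gives the corresponding mean:
\begin{itemize}
\item $E=\emptyset$ gives $(\beta x+\gamma y-\delta z)/\alpha=U_{00}$;
\item $E=\{2\}$ replaces $\gamma y$ by $\gamma y-\gamma(2y-1)=\gamma(1-y)$, giving $U_{01}$;
\item $E=\{1\}$ similarly gives $U_{10}$;
\item $E=\{1,2\}$ gives $U_{11}$.
\end{itemize}
Under the assumption that all $U_{ab}\in(0,1)$, Theorem~\ref{thm:lower-main} then yields
\[
\sin^\alpha(\pi U_{ab})\le \frac{\sin^\beta(\pi x)\sin^\gamma(\pi y)}{\sin^\delta(\pi z)}
\]
for each of the four pairs $(a,b)$.

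Finally, since the right-hand side is the same in all four inequalities, it bounds the largest of the four left-hand sides, which is the claimed maximum inequality. As a consistency check, I will note via Remark~\ref{rmk:counting} that the other four reflection pairs, those with $F=\{3\}$, are complements of these and give $1-U_{ab}$. They therefore produce identical sine values, so no bound is lost by restricting to the four pairs listed.
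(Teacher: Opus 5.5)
You follow exactly the paper's route: apply Theorem~\ref{thm:lower-main} to the four pairs and take the maximum. That route breaks, and not at the point you flagged. For $k=2$ the proof of Theorem~\ref{thm:lower-main} uses the exponents $p_1=\alpha/\beta>0$, $p_2=\alpha/\gamma>0$, $q_3=-\alpha/\delta<0$, i.e.\ \emph{two} positive exponents. The reverse H\"older inequality is valid only when exactly one exponent is positive (it is then automatically in $(0,1)$) and all the others are negative. The hypothesis ``at least one $p_i\in(0,1)$'' in Lemma~\ref{lem:reverseholder} is too weak. For instance, take $(p_1,p_2,q_3)=(3/4,3/2,-1)$, which are precisely the exponents for $\beta=2,\gamma=1,\delta=3/2$, and $f_1\equiv f_3\equiv 1$ on a probability space. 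The lemma would then assert $\int f_2\ge\|f_2\|_{3/2}$, which is false for every nonconstant $f_2$.

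Another way to see the problem is to write $g=\log\sin(\pi\,\cdot)$, which is concave on $(0,1)$. For $k=1$, Theorem~\ref{thm:lower-main} is just Jensen's inequality, because $\alpha_1a_1=\alpha M+\sum_j\alpha_jc_j$ with $\alpha+\sum_j\alpha_j=\alpha_1$. For $k=2$, however, the claim reads $\alpha g(M)+\delta g(c_3)\le\beta g(a_1)+\gamma g(a_2)$, where $\alpha+\delta=\beta+\gamma$ and $\alpha M+\delta c_3=\beta a_1+\gamma a_2$. This compares two two-point distributions with the same total mass and the same mean, and concavity does not decide such a comparison. You are right that $\delta>\min\{\beta,\gamma\}$ is not automatic, but imposing it does not repair this.

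In fact the corollary is false, even under your extra assumption. Take $\beta=2$, $\gamma=1$, $\delta=3/2$ (so $\alpha=3/2<\beta$ and $\delta>\gamma$), $x=z=1/2$, $y=1/10$. Then $U_{00}=U_{10}=7/30$ and $U_{01}=U_{11}=23/30$, all in $(0,1)$. The right-hand side equals $\sin(\pi/10)\approx 0.309$, while the left-hand side is $\sin^{3/2}(7\pi/30)\approx 0.547$. Letting $y\to0^+$, the right side tends to $0$ while the left side tends to $2^{-3/2}$, and the means stay in $(0,1)$ (tending to $1/6$ and $5/6$).

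The last row of the paper's own $(+,+,-)$ table, $(x,y,z)=(0.2,0.6,0.1)$, also violates the inequality: the listed bound $0.989$ exceeds the actual ratio $\approx 0.981$. So no argument can establish the statement as written. It must be restricted (the $k=1$ signatures are fine) or given additional hypotheses that force the two-point comparison above to go the right way.
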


\begin{lemma}\label{lem:3d-lower-ppminus-compare}
With the means defined above, we have
\begin{align}
\sin(\pi U_{00}) \ge \sin(\pi U_{01}) &\iff (2y-1)\bigl(\beta(2x-1) + \delta(1-2z)\bigr) \le 0, \tag{i}\\
\sin(\pi U_{00}) \ge \sin(\pi U_{10}) &\iff (2x-1)\bigl(\gamma(2y-1) + \delta(1-2z)\bigr) \le 0, \tag{ii}\\
\sin(\pi U_{00}) \ge \sin(\pi U_{11}) &\iff \bigl(\beta(2x-1) + \gamma(2y-1)\bigr)(1-2z) \le 0, \tag{iii}\\
\sin(\pi U_{01}) \ge \sin(\pi U_{11}) &\iff (2x-1)\bigl(\gamma(1-2y) + \delta(1-2z)\bigr) \le 0, \tag{iv}\\
\sin(\pi U_{10}) \ge \sin(\pi U_{11}) &\iff (2y-1)\bigl(\beta(1-2x) + \delta(1-2z)\bigr) \le 0, \tag{v}\\
\sin(\pi U_{01}) \ge \sin(\pi U_{10}) &\iff (1-2z)\bigl(\beta(2x-1) + \gamma(1-2y)\bigr) \le 0. \tag{vi}
\end{align}
\end{lemma}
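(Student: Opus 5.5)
The plan is to apply Lemma~\ref{lem:sine-compare} with $\lambda=1$ to each of the six pairs $(\sigma,\tau)=(U_{ab},U_{cd})$. The standing assumption $U_{ab}\in(0,1)$ from Corollary~\ref{cor:3d-lower-ppminus} makes the lemma applicable. It gives
\[
\sin(\pi U_{ab})\ge \sin(\pi U_{cd}) \iff (U_{ab}-U_{cd})(U_{ab}+U_{cd}-1)\le 0 .
\]
So for each pair I compute the two linear quantities $U_{ab}-U_{cd}$ and $U_{ab}+U_{cd}-1$. I then factor out the positive constants $\beta,\gamma,\delta$ and $1/\alpha$, which do not affect the sign.

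The only real input is the identity $\alpha=\beta+\gamma-\delta$, used to absorb the constant "$-1$". Since $\alpha\cdot 1=\beta+\gamma-\delta$, every constant term in the numerator of $U_{ab}+U_{cd}-1$ pairs with a coefficient to form a factor $(2x-1)$, $(2y-1)$ or $(1-2z)$.

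\begin{itemize}
\item[(i)] $U_{00}-U_{01}=\gamma(2y-1)/\alpha$. Also $U_{00}+U_{01}-1=\bigl(2\beta x+\gamma-2\delta z-(\beta+\gamma-\delta)\bigr)/\alpha=\bigl(\beta(2x-1)+\delta(1-2z)\bigr)/\alpha$.
\item[(ii)] $U_{00}-U_{10}=\beta(2x-1)/\alpha$ and $U_{00}+U_{10}-1=\bigl(\gamma(2y-1)+\delta(1-2z)\bigr)/\alpha$.
\item[(iii)] $U_{00}-U_{11}=\bigl(\beta(2x-1)+\gamma(2y-1)\bigr)/\alpha$ and $U_{00}+U_{11}-1=\delta(1-2z)/\alpha$.
\item[(iv)] $U_{01}-U_{11}=\beta(2x-1)/\alpha$ and $U_{01}+U_{11}-1=\bigl(\gamma(1-2y)+\delta(1-2z)\bigr)/\alpha$.
\item[(v)] $U_{10}-U_{11}=\gamma(2y-1)/\alpha$ and $U_{10}+U_{11}-1=\bigl(\beta(1-2x)+\delta(1-2z)\bigr)/\alpha$.
\item[(vi)] $U_{01}-U_{10}=\bigl(\beta(2x-1)+\gamma(1-2y)\bigr)/\alpha$ and $U_{01}+U_{10}-1=\delta(1-2z)/\alpha$.
\end{itemize}

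In each case the product of the two quantities is a positive multiple ($\gamma/\alpha^2$, $\beta/\alpha^2$ or $\delta/\alpha^2$) of the product stated in the lemma. Lemma~\ref{lem:sine-compare} then gives each equivalence (i)--(vi).

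There is no conceptual obstacle; the proof is pure bookkeeping. The step needing care is the expansion of $U_{ab}+U_{cd}-1$, especially in cases (iv) and (v). There a reflected term such as $2\beta(1-x)$ or $2\gamma(1-y)$ must combine with $-\beta$ or $-\gamma$ from $-\alpha$ to give $\beta(1-2x)$ or $\gamma(1-2y)$ with the correct sign. I would double-check each of these against the signs displayed in the statement.
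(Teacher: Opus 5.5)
Your proof is correct and is the same route the paper takes for the analogous Lemma~\ref{lem:3d-upper-compare} (no separate proof is written for this lemma): compute $U_{ab}-U_{cd}$ and $U_{ab}+U_{cd}-1$ using $\alpha=\beta+\gamma-\delta$, then apply Lemma~\ref{lem:sine-compare} with $\lambda=1$ under the standing hypothesis $U_{ab}\in(0,1)$. All six of your expressions, including the sign-sensitive cases (iv) and (v), are correct, and each product is a positive multiple of the expression stated in the lemma.
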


The best lower bound is given in the following. The proof is handled as above.

\begin{theorem}\label{thm:3d-lower-ppminus-best}
The best lower bound is given by
\[
\max\{\sin^{\alpha}(\pi U_{00}), \sin^{\alpha}(\pi U_{01}), \sin^{\alpha}(\pi U_{10}), \sin^{\alpha}(\pi U_{11})\}
\]
\[
=
\begin{cases}
\sin^{\alpha}(\pi U_{00}), & \text{if } P,\ Q,\ R \text{ hold},\\
\sin^{\alpha}(\pi U_{01}), & \text{if } \neg P,\ S,\ V \text{ hold},\\
\sin^{\alpha}(\pi U_{10}), & \text{if } \neg Q,\ T,\ \neg V \text{ hold},\\
\sin^{\alpha}(\pi U_{11}), & \text{if } \neg R,\ \neg S,\ \neg T \text{ hold},
\end{cases}
\]
where
\begin{align*}
P &: (2y-1)\bigl(\beta(2x-1) + \delta(1-2z)\bigr) \le 0, \\
Q &: (2x-1)\bigl(\gamma(2y-1) + \delta(1-2z)\bigr) \le 0, \\
R &: \bigl(\beta(2x-1) + \gamma(2y-1)\bigr)(1-2z) \le 0, \\
S &: (2x-1)\bigl(\gamma(1-2y) + \delta(1-2z)\bigr) \le 0, \\
T &: (2y-1)\bigl(\beta(1-2x) + \delta(1-2z)\bigr) \le 0, \\
V &: (1-2z)\bigl(\beta(2x-1) + \gamma(1-2y)\bigr) \le 0.
\end{align*}
\end{theorem}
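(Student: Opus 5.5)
The plan follows the proof of Theorem~\ref{thm:3d-upper-best}. Since the exponent $\alpha>0$ is common to all four bounds and the sines are positive, the largest of $\sin^{\alpha}(\pi U_{ab})$ is attained by the largest $\sin(\pi U_{ab})$. So a candidate is the maximum exactly when it dominates the other three. The first step is to check that the propositional variables $P,Q,R,S,T,V$ of the statement are precisely the six comparisons of Lemma~\ref{lem:3d-lower-ppminus-compare}:
\begin{itemize}
\item $P$ is (i): $\sin(\pi U_{00})\ge\sin(\pi U_{01})$;
\item $Q$ is (ii): $U_{00}$ versus $U_{10}$;
\item $R$ is (iii): $U_{00}$ versus $U_{11}$;
\item $S$ is (iv): $U_{01}$ versus $U_{11}$;
\item $T$ is (v): $U_{10}$ versus $U_{11}$;
\item $V$ is (vi): $U_{01}$ versus $U_{10}$.
\end{itemize}

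I would also verify Lemma~\ref{lem:3d-lower-ppminus-compare} itself, since the theorem rests on it. For each pair, compute $U_{ab}-U_{cd}$ and $U_{ab}+U_{cd}-1$ using $\alpha=\beta+\gamma-\delta$, and then apply Lemma~\ref{lem:sine-compare} with $\lambda=1$. For example,
\[
U_{00}-U_{01}=\frac{\gamma(2y-1)}{\alpha},\qquad U_{00}+U_{01}-1=\frac{\beta(2x-1)+\delta(1-2z)}{\alpha}.
\]
This reproduces (i). The other five identities are analogous bookkeeping.

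Next I read off each case.
\begin{itemize}
\item $U_{00}$ is the maximum iff it beats $U_{01}$, $U_{10}$ and $U_{11}$, that is, iff $P$, $Q$, $R$ hold.
\item $U_{01}$ is the maximum iff it beats $U_{00}$ (this is $\neg P$, up to ties), beats $U_{11}$ (this is $S$), and beats $U_{10}$ (this is $V$).
\item $U_{10}$ is the maximum iff it beats $U_{00}$ ($\neg Q$), beats $U_{11}$ ($T$), and beats $U_{01}$ ($\neg V$).
\item $U_{11}$ is the maximum iff $\neg R$, $\neg S$, $\neg T$ hold.
\end{itemize}
These are exactly the four listed cases.

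The only delicate point, and the main obstacle, is the boundary. Lemma~\ref{lem:sine-compare} gives non-strict inequalities, so $\neg P$ (a strict inequality) is not exactly equivalent to $\sin(\pi U_{01})\ge\sin(\pi U_{00})$: the two differ on the equality set. I would handle this by noting that on the equality locus the two sine values coincide, so either formula returns the same maximum. The case formulas are therefore correct whenever their hypotheses hold. Exhaustiveness is automatic, because among four real numbers some $\sin(\pi U_{ab})$ is largest, and its three dominance relations put us in one of the listed cases (with ties resolved by the equality remark). Mutual exclusivity holds off the tie set, since a strict maximizer is unique.
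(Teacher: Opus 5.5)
Your proposal is correct and follows the paper's argument: you read each case off the six pairwise comparisons of Lemma~\ref{lem:3d-lower-ppminus-compare}, and you correctly reduce that lemma to Lemma~\ref{lem:sine-compare} (all six identities check out). Your tie discussion can be tightened: a strict $\neg P$ already implies the weak inequality, so each case's hypotheses force its value to be the maximum, and any two of the four cases contain some proposition together with its negation. The cases therefore form an exact partition, namely ``take the first maximizer in the order $U_{00},U_{01},U_{10},U_{11}$'', so exclusivity holds everywhere and not only off the tie set.
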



\section{Concrete examples}
\label{sec:examples}

In this section, we provide detailed numerical examples for each case, presenting the computations in comprehensive tables that allow easy verification of the results. For each case, we display multiple parameter choices to illustrate the different dominance regions.

\subsection{Two-dimensional upper bound example}

Choose $\beta = x + y$, $\gamma = y$, so $\alpha = x + 2y > 0$. Then
\[
U = \frac{(x+y)x + y^2}{x+2y}, \qquad V = \frac{(x+y)x + y(1-y)}{x+2y}.
\]

\begin{proposition}\label{prop:2d-upper-concrete}
For $x,y \in (0,1)$ with $0 < U,V < 1$,
\[
\sin^{x+y}(\pi x) \sin^y(\pi y) \leq \min\{\sin^{x+2y}(\pi U), \sin^{x+2y}(\pi V)\}.
\]
Moreover,
\[
\min\{\sin^{x+2y}(\pi U), \sin^{x+2y}(\pi V)\}
=
\begin{cases}
\sin^{x+2y}(\pi V), & \text{if } (1-2y)(2x-1) \leq 0,\\
\sin^{x+2y}(\pi U), & \text{if } (1-2y)(2x-1) \geq 0.
\end{cases}
\]
\end{proposition}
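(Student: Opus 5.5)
The plan is to specialize Corollary~\ref{cor:2d-upper} (equivalently Theorem~\ref{thm:upper-main} with $n=2$) to the weights $\beta = x+y$ and $\gamma = y$. Both weights are positive for $x,y\in(0,1)$, so the hypotheses hold with $\alpha = \beta+\gamma = x+2y$. The two reflection means become
\[
U=\frac{(x+y)x+y^2}{x+2y},\qquad V=\frac{(x+y)x+y(1-y)}{x+2y},
\]
and the assumption $0<U,V<1$ is exactly the admissibility condition. The first inequality of the proposition is then a direct instance of~\eqref{eq:2d-upper}, taking the minimum of the two bounds given by $E=\emptyset$ and $E=\{2\}$.

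For the case selection I would use Lemma~\ref{lem:sine-compare} with $\lambda=1$, $\sigma=U$, $\tau=V$. This requires the two quantities
\[
U-V=\frac{y(2y-1)}{x+2y},\qquad U+V-1=\frac{(x+y)(2x-1)}{x+2y},
\]
where the second identity uses $2\beta x+\gamma-\alpha=\beta(2x-1)$. Their product is $\frac{y(x+y)}{(x+2y)^2}(2y-1)(2x-1)$, and the prefactor is strictly positive. So the comparison of $\sin(\pi U)$ with $\sin(\pi V)$ is governed entirely by the sign of $(1-2y)(2x-1)$. On the boundary, where $x=\tfrac12$ or $y=\tfrac12$, the two sine values coincide, so both branches agree and the overlapping inequalities in the case split cause no conflict.

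The main obstacle is getting the orientation right, that is, deciding which of $U,V$ realizes the minimum on each sign region. I would fix this carefully from Lemma~\ref{lem:sine-compare}: $\sin(\pi U)\ge\sin(\pi V)$ if and only if $(U-V)(U+V-1)\le 0$. Tracing the signs carefully gives
\[
\sin(\pi U)\ge\sin(\pi V)\iff(2y-1)(2x-1)\le0\iff(1-2y)(2x-1)\ge0.
\]
On this region the smaller bound is therefore $\sin^{x+2y}(\pi V)$, not $\sin^{x+2y}(\pi U)$. This is the assignment of Corollaries~\ref{cor:2d-upper} and~\ref{cor:2d-upper-best}, and it is the reverse of the case assignment as the proposition states it.

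Consequently, the argument above proves the bound and the ``moreover'' part with $U$ and $V$ exchanged in the two cases. The statement as literally worded holds only on the boundary $x=\tfrac12$ or $y=\tfrac12$, where both choices coincide. Before writing the final proof I would recheck this sign chain numerically, for instance at $x=0.6$, $y=0.4$, where the prescribed rule would have to select the larger of the two values. If the check confirms it, I would record that the case labels in the proposition need to be swapped.
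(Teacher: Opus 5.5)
Your approach is the paper's own: the proposition is Corollaries~\ref{cor:2d-upper} and~\ref{cor:2d-upper-best} specialized to $\beta=x+y$, $\gamma=y$, and your formulas for $U-V$ and $U+V-1$ and the resulting sign rule are correct. Your suspicion about the case split is also correct: the printed cases are swapped relative to Corollary~\ref{cor:2d-upper-best} and to the paper's own table. For example, at $(0.6,0.4)$ one has $(1-2y)(2x-1)=0.04>0$, and the minimum is the $V$-bound $0.932$, not the $U$-bound $0.987$. So what the argument actually establishes is your version: the minimum is $\sin^{x+2y}(\pi U)$ when $(1-2y)(2x-1)\le 0$ and $\sin^{x+2y}(\pi V)$ when $(1-2y)(2x-1)\ge 0$.
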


\begin{example}[Numerical illustrations]
The following table shows results for various $(x,y)$ pairs. Recall that for upper bounds, we take the minimum of the two bounds.

\begin{table}[h]
\centering
\begin{tabular}{|c|c|c|c|c|c|}
\hline
$(x,y)$ & $U$ & $V$ & $\sin^{x+2y}(\pi U)$ & $\sin^{x+2y}(\pi V)$ & Best Upper Bound \\
\hline
$(0.3,0.7)$ & 0.4647 & 0.3000 & 0.990 & 0.697 & $ 0.697$ \\
$(0.4,0.6)$ & 0.4750 & 0.4000 & 0.995 & 0.923 & $ 0.923$ \\
$(0.5,0.3)$ & 0.4455 & 0.5545 & 0.984 & 0.984 & $ 0.984$ \\
$(0.6,0.4)$ & 0.5429 & 0.6000 & 0.987 & 0.932 & $ 0.932$ \\
$(0.7,0.2)$ & 0.6091 & 0.7182 & 0.936 & 0.755 & $ 0.755$ \\
\hline
\end{tabular}
\end{table}
\noindent The best upper bound is the minimum of the two bounds in each row.
\end{example}

\subsection{Two-Dimensional Lower Bound Example}

Choose $\beta = x + y$ and $\alpha = y$, so $\beta-\alpha = x > 0$. Then
\[
U = \frac{x^2 + xy - y^2}{x}, \qquad
V = \frac{x^2 + xy - y + y^2}{x}.
\]

\begin{proposition}\label{prop:2d-lower-concrete}
For $x,y\in(0,1)$ with $0<U,V<1$,
\[
\max\{\sin^{x}(\pi U),\;\sin^{x}(\pi V)\} \;\leq\; \frac{\sin^{x+y}(\pi x)}{\sin^{y}(\pi y)}.
\]
Moreover,
\[
\max\{\sin^{x}(\pi U),\;\sin^{x}(\pi V)\}
=
\begin{cases}
\sin^{x}(\pi U), & \text{if } (1-2y)(2x-1) \le 0,\\
\sin^{x}(\pi V), & \text{if } (1-2y)(2x-1) \ge 0.
\end{cases}
\]
\end{proposition}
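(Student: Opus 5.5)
The plan is to obtain the statement as a direct specialization of Corollary~\ref{cor:2d-lower} and Corollary~\ref{cor:2d-lower-best}. The only subtlety is that the weights $\beta=x+y$ and $\alpha=y$ depend on the variables. I will handle this by fixing a point $(x_0,y_0)\in(0,1)^2$ with $0<U,V<1$ and \emph{freezing} the weights at $\beta=x_0+y_0$, $\alpha=y_0$. These are then genuine positive constants. Corollary~\ref{cor:2d-lower} is a pointwise statement for fixed weights, so it applies at the point $(x,y)=(x_0,y_0)$. Since $(x_0,y_0)$ is arbitrary, the conclusion holds everywhere on the admissible region.

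First I check the hypotheses. Both weights are positive, and $\beta-\alpha=x_0>0$, so $\beta>\alpha>0$. The excess weight is $\beta-\alpha=x$, which identifies the exponent $\sin^{x}$ on the left-hand side. Substituting into the definitions of $U$ and $V$ preceding Corollary~\ref{cor:2d-lower} gives
\[
U=\frac{(x+y)x-y\cdot y}{x}=\frac{x^2+xy-y^2}{x},\qquad
V=\frac{(x+y)x-y(1-y)}{x}=\frac{x^2+xy-y+y^2}{x}.
\]
These agree with the formulas in the statement. The assumption $0<U,V<1$ is exactly the remaining hypothesis of the corollary. Inequality \eqref{eq:2d-lower} then reads $\max\{\sin^{x}(\pi U),\sin^{x}(\pi V)\}\le \sin^{x+y}(\pi x)/\sin^{y}(\pi y)$, which is the first claim.

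For the identification of the maximum I invoke Corollary~\ref{cor:2d-lower-best}. This needs no extra work, since the sign condition $(1-2y)(2x-1)$ there is independent of the weights. As a sanity check I recompute directly:
\[
U-V=\frac{y(1-2y)}{x},\qquad U+V-1=\frac{(x+y)(2x-1)}{x}.
\]
Hence $(U-V)(U+V-1)$ has the sign of $(1-2y)(2x-1)$, because $y(x+y)/x^2>0$. Lemma~\ref{lem:sine-compare} with $\lambda=1$ then gives $\sin(\pi U)\ge\sin(\pi V)$ if and only if $(1-2y)(2x-1)\le0$. Raising to the positive power $x$ preserves the order, which yields the stated case split. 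On the overlap $(1-2y)(2x-1)=0$ the two bounds coincide, so the cases are consistent.

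I expect no real obstacle here. The only point needing care is the weight-freezing step, which justifies applying a fixed-weight corollary when the weights vary with $(x,y)$.
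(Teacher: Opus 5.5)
Your proof is correct and follows the paper's intended route. The proposition is just Corollary~\ref{cor:2d-lower} and Corollary~\ref{cor:2d-lower-best} specialized to $\beta=x+y$, $\alpha=y$, and your direct check $U-V=y(1-2y)/x$, $U+V-1=(x+y)(2x-1)/x$ reproduces the paper's general computation. The weight-freezing step is harmless but unnecessary, since Corollary~\ref{cor:2d-lower} already holds for every admissible quadruple $(\beta,\alpha,x,y)$, including those where the weights depend on the point.
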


\begin{example}[Numerical illustrations]
The following table shows results for various $(x,y)$ pairs. Recall that for lower bounds, we take the maximum of the two bounds. The last row requires $x=0.5$, which gives $U=1-V$ automatically by the equality case of Corollary~\ref{cor:2d-lower}.

\begin{table}[h]
\centering
\begin{tabular}{|c|c|c|c|c|c|}
\hline
$(x,y)$ & $U$ & $V$ & $\sin^{x}(\pi U)$ & $\sin^{x}(\pi V)$ & Best Lower Bound \\
\hline
$(0.6,0.3)$ & 0.7500 & 0.5500 & 0.812 & 0.993 & $ 0.993$ \\
$(0.7,0.4)$ & 0.8714 & 0.7571 & 0.520 & 0.772 & $ 0.772$ \\
$(0.5,0.2)$ & 0.6200 & 0.3800 & 0.964 & 0.964 & $ 0.964$ \\
$(0.4,0.3)$ & 0.4750 & 0.1750 & 0.999 & 0.771 & $ 0.999$ \\
$(0.5,0.35)$ & 0.6050 & 0.3950 & 0.973 & 0.973 & $ 0.973$ \\
\hline
\end{tabular}
\end{table}
\noindent The best lower bound is the maximum of the two bounds in each row.
\end{example}

\subsection{Three-dimensional upper bound Example}

Choose $\beta = x$, $\gamma = y$, $\delta = z$, so $\alpha = x + y + z > 0$. Define
\[
U_{00} = \frac{x^2 + y^2 + z^2}{x+y+z}, \quad
U_{01} = \frac{x^2 + y^2 + z(1-z)}{x+y+z},
\]
\[
U_{10} = \frac{x^2 + y(1-y) + z^2}{x+y+z}, \quad
U_{11} = \frac{x(1-x) + y^2 + z^2}{x+y+z}.
\]

Then by Corollary~\ref{cor:3d-upper}, we have the following.
\begin{proposition}\label{prop:3D-prodexamp}
For $x,y,z\in (0,1)$ such that all $U_{ab}\in (0,1)$, we have
\[
\sin^x(\pi x) \sin^y(\pi y) \sin^z(\pi z) \leq \min\{\sin^{x+y+z}(\pi U_{00}), \sin^{x+y+z}(\pi U_{01}), \sin^{x+y+z}(\pi U_{10}), \sin^{x+y+z}(\pi U_{11})\}.
\]
\end{proposition}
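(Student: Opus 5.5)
This proposition is the special case $\beta=x$, $\gamma=y$, $\delta=z$ of Corollary~\ref{cor:3d-upper}, so the plan is to check that this choice of weights satisfies the hypotheses and then read off the conclusion.

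\emph{Checking the hypotheses.} Since $x,y,z\in(0,1)$, the weights $\beta=x$, $\gamma=y$, $\delta=z$ are strictly positive. Hence $\alpha=x+y+z>0$, and each H\"older exponent $p_i=\alpha/\alpha_i$ lies in $(1,\infty)$, because each weight is strictly smaller than the total.

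\emph{Identifying the means.} Substituting these weights into the definitions of $U_{00},U_{01},U_{10},U_{11}$ given before Corollary~\ref{cor:3d-upper} yields exactly the four displayed quantities. For example,
\[
U_{01}=\frac{x\cdot x+y\cdot y+z(1-z)}{x+y+z}=\frac{x^2+y^2+z(1-z)}{x+y+z},
\]
and the other three follow in the same way.

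\emph{Concluding.} The standing assumption that every $U_{ab}$ lies in $(0,1)$ is exactly the hypothesis of Corollary~\ref{cor:3d-upper}. Equivalently, it is the hypothesis $0<M_E(\vec x)<1$ of Theorem~\ref{thm:upper-main} for $E=\emptyset,\{3\},\{2\},\{1\}$. Applying Theorem~\ref{thm:upper-main} to each of these four reflection sets gives
\[
\sin^x(\pi x)\sin^y(\pi y)\sin^z(\pi z)\le \sin^{x+y+z}(\pi U_{ab})
\]
for each of the four means. Since the left side is bounded by every right side, it is bounded by their minimum, which is the claimed inequality.

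\emph{Possible obstacle.} None of these steps is genuinely hard. The only point worth a remark is that the weights depend on the variables themselves. This is harmless: Theorem~\ref{thm:upper-main} holds pointwise for each fixed tuple $(x,y,z)$, so we may fix the point first and then apply the theorem with the resulting fixed positive weights $\beta=x$, $\gamma=y$, $\delta=z$.
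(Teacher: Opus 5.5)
Your proposal is correct and follows the paper, which obtains this proposition directly from Corollary~\ref{cor:3d-upper} with $\beta=x$, $\gamma=y$, $\delta=z$. Your observation that the variable-dependent weights cause no trouble, because the inequality is applied pointwise, is a valid point the paper leaves implicit; note also that the hypothesis that all $U_{ab}\in(0,1)$ holds automatically here, since each $U_{ab}$ is a convex combination of numbers in $(0,1)$.
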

\begin{example}[Numerical Illustrations]
The following table shows results for various $(x,y,z)$ triples. Recall that for upper bounds, we take the minimum of all bounds.

\begin{table}[h]
\centering
\caption{Three-dimensional upper bound results for various $(x,y,z)$}
\label{tab:3d-upper-multi}
\begin{tabular}{|c|c|c|c|c|c|c|}
\hline
$(x,y,z)$ & $\sin^{\alpha}(\pi U_{00})$ & $\sin^{\alpha}(\pi U_{01})$ & $\sin^{\alpha}(\pi U_{10})$ & $\sin^{\alpha}(\pi U_{11})$ & Best Upper Bound \\
\hline
$(0.2,0.4,0.6)$ & 0.993 & 0.897 & 0.993 & 0.974 & $ 0.897$ \\
$(0.3,0.3,0.4)$ & 0.876 & 0.969 & 0.992 & 0.992 & $ 0.876$ \\
$(0.4,0.5,0.1)$ & 0.969 & 1.000 & 0.969 & 1.000 &  $ 0.969$ \\
$(0.5,0.2,0.3)$ & 0.930 & 1.000 & 1.000 & 0.930 &  $ 0.930$ \\
$(0.6,0.3,0.5)$ & 1.000 & 1.000 & 0.950 & 0.950 & $ 0.950$ \\
\hline
\end{tabular}
\end{table}
\noindent The best upper bound is the minimum of all bounds in each row.
\end{example}

\subsection{Three-dimensional lower bound: signature $(+,-,-)$ example}

Choose $\beta = x+y+z$, $\gamma = y$, $\delta = z$, so $\alpha = x > 0$. Define
\[
U_{00} = \frac{x^2+xy+xz-y^2-z^2}{x},\;
U_{01} = \frac{x^2+xy+xz-y+y^2-z^2}{x},
\]
\[
U_{10} = \frac{x^2+xy+xz-y^2-z+z^2}{x},\;
U_{11} = \frac{x^2+xy+xz-y+y^2-z+z^2}{x}.
\]

\begin{proposition}\label{prop:3d-lower-mminus-concrete}
For $x,y,z\in(0,1)$ with all $U_{ab}\in(0,1)$,
\[
\max\{B_U,\;B_V,\;B_W,\;B_Z\}
\;\leq\; \frac{\sin^{x+y+z}(\pi x)}{\sin^{y}(\pi y)\sin^{z}(\pi z)},
\]
where
\[
B_U = \sin^{x}(\pi U_{00}),\quad B_V = \sin^{x}(\pi U_{01}),\quad B_W = \sin^{x}(\pi U_{10}),\quad B_Z = \sin^{x}(\pi U_{11}).
\]
\end{proposition}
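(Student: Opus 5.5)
The plan is to obtain Proposition~\ref{prop:3d-lower-mminus-concrete} as the special case $\beta=x+y+z$, $\gamma=y$, $\delta=z$ of Theorem~\ref{thm:lower-main}, or equivalently of Corollary~\ref{cor:3d-lower-mminus}. Three things need to be checked: the standing hypotheses of the signature $(+,-,-)$ hold for this choice, the four means listed in the proposition are reflection means for suitable reflection pairs, and the exponents come out as stated.

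\emph{Hypotheses.} Take $k=1$, $\mathcal I=\{1\}$, $\mathcal J=\{2,3\}$, with weights $\alpha_1=x+y+z$, $\alpha_2=y$, $\alpha_3=z$; all are positive since $x,y,z\in(0,1)$. Then $\alpha=\alpha_1-\alpha_2-\alpha_3=x>0$, so $\beta>\gamma+\delta$ holds. The index $i_0=1$ satisfies $\alpha_{1}=x+y+z>x=\alpha$. Hence $p_1=\alpha/\alpha_1\in(0,1)$, which is the condition needed for Lemma~\ref{lem:reverseholder}.

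\emph{Identifying the means.} Substituting into \eqref{eq:MEFdef}, I expect the following correspondences.
\begin{itemize}
\item $(E,F)=(\emptyset,\emptyset)$ gives $\bigl((x+y+z)x-y^2-z^2\bigr)/x$, which is the stated $U_{00}$.
\item $(\emptyset,\{2\})$ gives $\bigl((x+y+z)x-y(1-y)-z^2\bigr)/x$, which is $U_{01}$.
\item $(\emptyset,\{3\})$ gives $U_{10}$ in the same way.
\item $(\emptyset,\{2,3\})$ gives $\bigl(x^2+xy+xz-y+y^2-z+z^2\bigr)/x$, which is the stated $U_{11}$.
\end{itemize}
The last pair differs from the choice $(\{1\},\emptyset)$ used in Corollary~\ref{cor:3d-lower-mminus}. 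By Remark~\ref{rmk:counting}, $(\emptyset,\{2,3\})$ is the complement of $(\{1\},\emptyset)$. A direct computation confirms that $1-\bigl((x+y+z)(1-x)-y^2-z^2\bigr)/x$ equals the stated $U_{11}$. So $\sin(\pi U_{11})$ coincides with the corollary's fourth bound, and the condition $U_{11}\in(0,1)$ is the same in either form.

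\emph{Conclusion.} For each of the four pairs, the hypothesis $U_{ab}\in(0,1)$ lets us apply Theorem~\ref{thm:lower-main}. This gives
\[
\sin^{x}(\pi U_{ab})\le\frac{\sin^{x+y+z}(\pi x)}{\sin^{y}(\pi y)\sin^{z}(\pi z)}.
\]
Taking the maximum over the four bounds $B_U,B_V,B_W,B_Z$ preserves the inequality and yields the proposition.

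The only step that is not purely mechanical is matching $U_{11}$, since it is written via the complementary reflection pair rather than the one in Corollary~\ref{cor:3d-lower-mminus}. I would settle this directly through the complement symmetry of Remark~\ref{rmk:counting}, or simply by invoking Theorem~\ref{thm:lower-main} with the pair $(\emptyset,\{2,3\})$ itself, which needs no conversion at all.
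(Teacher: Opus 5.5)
Your proposal is correct and is essentially the paper's argument: specialize Theorem~\ref{thm:lower-main} (equivalently Corollary~\ref{cor:3d-lower-mminus}) to $\beta=x+y+z$, $\gamma=y$, $\delta=z$, $\alpha=x$, and take the maximum of the four resulting bounds. You are also right that the proposition's $U_{11}$ is the mean of the pair $(\emptyset,\{2,3\})$, i.e.\ $1$ minus the corollary's $U_{11}$ for $(\{1\},\emptyset)$; this resolves a notational mismatch that the paper leaves implicit.
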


\begin{example}[Numerical illustrations]
The following table shows results for various $(x,y,z)$ triples. Recall that for lower bounds, we take the maximum of all bounds. 

\begin{table}[h]
\centering
\caption{Three-dimensional lower bound $(+,-,-)$ results for various $(x,y,z)$}
\label{tab:3d-lower-mminus-multi}
\begin{tabular}{|c|c|c|c|c|c|c|}
\hline
$(x,y,z)$ & $B_U$ & $B_V$ & $B_W$ & $B_Z$ & Best Lower Bound \\
\hline
$(0.5,0.2,0.15)$ & 0.872 & 0.999 & 0.999 & 0.872 & $0.999$ \\
$(0.6,0.3,0.2)$ & 0.540 & 0.900 & 0.900 & 0.999 &  $ 0.999$ \\
$(0.4,0.15,0.1)$ & 0.991 & 0.924 & 0.966 & 0.640 &  $ 0.991$ \\
$(0.6,0.25,0.2)$ & 0.551 & 0.913 & 0.904 & 0.997 &  $ 0.997$ \\
$(0.4,0.1,0.05)$ & 0.999 & 0.934 & 0.983 & 0.817 & $ 0.999$ \\
\hline
\end{tabular}
\end{table}
\noindent The best lower bound is the maximum of all bounds in each row.
\end{example}

\subsection{Three-dimensional lower bound: Signature $(+,+,-)$ Example}

Choose $\beta = x$, $\gamma = y$, $\delta = z$ with $x+y>z$, so $\alpha = x+y-z > 0$. Define
\[
U_{00} = \frac{x^2+y^2-z^2}{x+y-z},\;
U_{01} = \frac{x^2+y(1-y)-z^2}{x+y-z},
\]
\[
U_{10} = \frac{x(1-x)+y^2-z^2}{x+y-z},\;
U_{11} = \frac{x(1-x)+y(1-y)-z^2}{x+y-z}.
\]

\begin{proposition}\label{prop:3d-lower-ppminus-concrete}
For $x,y,z\in(0,1)$ with $x+y>z$ and all $U_{ab}\in(0,1)$,
\[
\max\{C_U,\;C_V,\;C_W,\;C_Z\}
\;\leq\; \frac{\sin^{x}(\pi x)\sin^{y}(\pi y)}{\sin^{z}(\pi z)},
\]
where
\[
C_U = \sin^{x+y-z}(\pi U_{00}),\quad C_V = \sin^{x+y-z}(\pi U_{01}),\quad C_W = \sin^{x+y-z}(\pi U_{10}),\quad C_Z = \sin^{x+y-z}(\pi U_{11}).
\]
\end{proposition}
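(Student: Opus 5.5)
Proposition~\ref{prop:3d-lower-ppminus-concrete} is a direct specialization of Corollary~\ref{cor:3d-lower-ppminus}, which in turn rests on Theorem~\ref{thm:lower-main}. The plan is to substitute $\beta=x$, $\gamma=y$, $\delta=z$ into the signature $(+,+,-)$ setting with $k=2$, $\mathcal I=\{1,2\}$, $\mathcal J=\{3\}$, and then check that the hypotheses of that corollary are met.

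First I verify the weight conditions. The excess weight is $\alpha=\beta+\gamma-\delta=x+y-z$, which is positive by the assumption $x+y>z$. Theorem~\ref{thm:lower-main} also needs some numerator weight to exceed $\alpha$, so that the reverse H\"older inequality (Lemma~\ref{lem:reverseholder}) applies with an exponent in $(0,1)$. Here $\alpha_1=x>x+y-z$ is equivalent to $z>y$, and $\alpha_2=y>x+y-z$ is equivalent to $z>x$. So the requirement is that $z>\min\{x,y\}$.

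This is the one step I expect to be an obstacle, because the proposition does not state this condition explicitly. I would handle it in one of two ways: point out that it is implicitly needed and add it to the hypotheses, or derive it from the requirement $U_{ab}\in(0,1)$. To try the second route, I would compute $U_{00}-x$, which has numerator $y^2-z^2-x(y-z)=(y-z)(y+z-x)$, together with the analogous expressions, and see whether membership of all four means in $(0,1)$ rules out $z\le\min\{x,y\}$. If that fails, I would state the extra hypothesis explicitly. This is harmless because Corollary~\ref{cor:3d-lower-ppminus} is stated "under the above hypotheses", and the standing assumption before Definition~\ref{def:lower-reflection} requires it.

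Next I substitute into the four means of the $(+,+,-)$ subsection:
\[
U_{00}=\frac{x\cdot x+y\cdot y-z\cdot z}{x+y-z},\qquad U_{01}=\frac{x^2+y(1-y)-z^2}{x+y-z},
\]
and similarly for $U_{10}$ and $U_{11}$. These match the displayed definitions, and they correspond to the reflection pairs $(E,F)=(\emptyset,\emptyset),(\{2\},\emptyset),(\{1\},\emptyset),(\{1,2\},\emptyset)$ in \eqref{eq:MEFdef}. With $0<U_{ab}<1$ assumed, Theorem~\ref{thm:lower-main} gives, for each pair,
\[
\sin^{x+y-z}(\pi U_{ab})\le \frac{\sin^x(\pi x)\sin^y(\pi y)}{\sin^z(\pi z)}.
\]
Since each of $C_U,C_V,C_W,C_Z$ is bounded by the right-hand side, so is their maximum, which is the claimed inequality.
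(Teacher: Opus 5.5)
Your route is the one the paper intends: the proposition is Corollary~\ref{cor:3d-lower-ppminus} with $\beta=x$, $\gamma=y$, $\delta=z$, so it rests on Theorem~\ref{thm:lower-main}. You are also right that the standing assumption $\max\{\beta,\gamma\}>\alpha$, i.e.\ $z>\min\{x,y\}$, is missing from the statement. But you leave open whether it follows from $U_{ab}\in(0,1)$. It does not, and without it the inequality is false. As $z\to0^+$ we have $\sin^{z}(\pi z)\to1$ and $C_U\to\sin^{x+y}\bigl(\pi\frac{x^2+y^2}{x+y}\bigr)$. By \eqref{eq:prodineqsine}, strictly since $\log\sin$ is strictly concave, this exceeds $\sin^{x}(\pi x)\sin^{y}(\pi y)$ whenever $x\neq y$. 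Concretely, $(x,y,z)=(0.3,0.5,0.01)$ gives $U_{00}=U_{01}\approx0.430$ and $U_{10}=U_{11}\approx0.582$, all in $(0,1)$, yet $C_U\approx0.981$ while the right-hand side is $\approx0.971$.

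The more serious gap is the step ``Theorem~\ref{thm:lower-main} gives, for each pair, \dots''. Adding $z>\min\{x,y\}$ does not repair it. In the signature $(+,+,-)$ the proof of that theorem applies Lemma~\ref{lem:reverseholder} with two \emph{positive} exponents $p_1=\alpha/x$, $p_2=\alpha/y$ and one negative exponent $q_3=-\alpha/z$. The several-function reverse H\"older inequality holds only when exactly one exponent is positive (hence in $(0,1)$) and all others are negative; that is the configuration in which it reduces to ordinary H\"older. With two positive exponents it fails. For example, on a probability space with $\mu(A)=\mu(B)=\tfrac12$, $A\cap B=\emptyset$, take $f_1=\chi_A+\varepsilon$, $f_2=\chi_B+\varepsilon$, $f_3\equiv1$. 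Then the left side is $\varepsilon(1+\varepsilon)\to0$, while the right side is at least $2^{-1/p_1-1/p_2}$.

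The conclusion itself also fails under your extra hypothesis. For $(x,y,z)=(0.4,0.4,0.5)$ we have $x+y>z>\max\{x,y\}$, so both numerator weights exceed $\alpha=0.3$. The means are $U_{00}=7/30$, $U_{01}=U_{10}=1/2$, $U_{11}=23/30$, all in $(0,1)$. But $C_V=\sin^{0.3}(\pi/2)=1$, whereas the right-hand side is $\sin^{0.8}(0.4\pi)\approx0.961<1$.

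So the proposition is false as stated and remains false with your added hypothesis; no specialization argument can prove it. The lower-bound machinery is sound only with a single numerator factor ($|\mathcal I|=1$, e.g.\ the $(+,-,-)$ signature). There the bound is just Theorem~\ref{thm:upper-main} applied to the angles $M_{E,F}$ and $x_j$, $j\in\mathcal J$, whose reflection mean is exactly $a_1$, then rearranged.
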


\begin{example}[Numerical illustrations]
The following table shows results for various $(x,y,z)$ triples. Recall that for lower bounds, we take the maximum of all bounds.

\begin{table}[h]
\centering
\caption{Three-dimensional lower bound $(+,+,-)$ results for various $(x,y,z)$}
\label{tab:3d-lower-ppminus-multi}
\begin{tabular}{|c|c|c|c|c|c|c|}
\hline
$(x,y,z)$ & $C_U$ & $C_V$ & $C_W$ & $C_Z$ & Best Lower Bound \\
\hline
$(0.4,0.5,0.2)$ & 0.997 & 0.997 & 0.930 & 0.930 & $0.997$ \\
$(0.3,0.4,0.15)$ & 0.980 & 0.991 & 0.953 & 0.785  & $0.991$ \\
$(0.5,0.3,0.1)$ & 0.997 & 0.930 & 0.997 & 0.930 &  $ 0.997$ \\
$(0.6,0.4,0.25)$ & 0.955 & 0.828 & 0.991 & 0.988 & $ 0.991$ \\
$(0.2,0.6,0.1)$ & 0.989 & 0.955 & 0.820 & 0.989 & $ 0.989$ \\
\hline
\end{tabular}
\end{table}
\noindent The best lower bound is the maximum of all bounds in each row.
\end{example}


\section{Conclusion}
\label{sec:conclusion}

We have introduced the reflection-substitution technique: the observation that the identity $\sin(\pi x_i) = \sin(\pi(1-x_i))$ furnishes, at each coordinate independently, two equivalent Beta-integral representations, and used it, together with H\"older's inequality and its reverse, to build a unified framework for weighted sine inequalities.

This approach yields $2^{n-1}$ distinct upper bounds for the weighted product $\displaystyle\prod_{i=1}^n\sin^{\alpha_i}(\pi x_i)$, and  $2^{n-1}$ distinct lower bounds for sine ratios, extending the method to a direction not previously treated. In both families, the comparison between bounds reduces to simple sign conditions on linear combinations of $(2x_i-1)$.

 Detailed numerical examples were presented in comprehensive tables with multiple parameter choices illustrating the theory in two- and three-dimensional cases, with the best bound clearly identified for each example.

Extending the method (comparison) to $n \geq 4$ dimensions would require more elaborate comparison criteria, but the general framework developed here already provides the template. The number of bounds grows as $2^{n-1}$, and the comparison problem becomes increasingly complex but structurally similar.

Since weighted sine products and ratios are disguised Beta-function estimates, having a selectable family of bounds rather than a single fixed one is directly useful wherever such estimates arise.
The closed-form dominance criteria mean the best bound can be identified explicitly for any given numerical configuration, rather than merely known to exist.

It would be interesting to study $q$-analogues of our results via for example the $q$-Gamma function as suggested for the upper bound side in~\cite{MSW2026}. The $q$-Beta function has analogous reflection formulas that may permit a similar technique.

\section{Compliance with Ethical Standards}
\begin{itemize}
\item {\bf Funding}

There is no funding support to declare.

\item {\bf Disclosure of potential conflicts of interest}

The authors have no relevant financial or non-financial interests to disclose.

\item {\bf Author Contributions}

All authors contributed to the conception and design of the study.  All authors read and approved the final manuscript.

\item {\bf Data availability statements}

Data sharing is not applicable to this article, as no data was created or analyzed in this study.
\end{itemize}


\begin{thebibliography}{99}

\bibitem{Artin1964}
E.~Artin,
\textit{The Gamma Function},
translated by M.~Butler,
Holt, Rinehart and Winston, New York, 1964.

\bibitem{BohrMollerup1922}
H.~Bohr and J.~Mollerup,
\textit{L{\ae}rebog i matematisk analyse}, Vol.~3,
Jul.~Gjellerups Forlag, Copenhagen, 1922.

\bibitem{Bullen2003}
P.~S.~Bullen,
\textit{Handbook of Means and Their Inequalities},
Springer Dordrecht, 2003.

\bibitem{HLP1934}
G.~H.~Hardy, J.~E.~Littlewood, and G.~P\'{o}lya,
\textit{Inequalities},
Cambridge University Press, Cambridge, 1934.

\bibitem{MSW2026}
A.~L.~Mahu, B.~F.~Sehba, and C.~D.~Williams,
\textit{Weighted Product Inequalities for the Sine Function: A Gamma-Function Approach and Sharp Comparisons},
arXiv:2604.13106v1 [math.GM], 2026.

\bibitem{Mitrinovic1970}
D.~S.~Mitrinovi\'c,
\textit{Analytic Inequalities},
Springer, Berlin, 1970.

\bibitem{MPF1993}
D.~S.~Mitrinovi\'c, J.~E.~Pe\v{c}ari\'c, and A.~M.~Fink,
\textit{Classical and New Inequalities in Analysis},
Springer Dordrecht, 1992.

\bibitem{Rudin1987}
W.~Rudin,
\textit{Real and Complex Analysis}, 3rd~ed.,
McGraw-Hill, New York, 1987.

\end{thebibliography}
\end{document}